


\documentclass[12pt, notitlepage]{amsart}
\usepackage{latexsym, amsfonts, amsmath, amssymb, amsthm, cite}


\newtheorem{theorem}{Theorem}[section]
\newtheorem{lemma}[theorem]{Lemma}
\newtheorem{proposition}[theorem]{Proposition}

\newtheorem{corollary}[theorem]{Corollary}

\newtheorem{definition}[theorem]{Definition}


\pagestyle{headings}

\oddsidemargin -0.25in
\evensidemargin -0.25in
\textwidth 6.5in

\sloppy
\flushbottom
\parindent 1em

\marginparwidth 48pt
\marginparsep 10pt
\columnsep 10mm

\usepackage{graphicx}
\usepackage{mathrsfs}

\begin{document}

\title{Integral Factorial ratios: Irreducible examples with height larger than $1$} 
 \author{K. Soundararajan} 
\address{Department of Mathematics \\ Stanford University \\
450 Serra Mall, Bldg. 380\\ Stanford, CA 94305-2125}
\email{ksound@math.stanford.edu}
\thanks{This work is partially supported by a grant from the National Science Foundation, and a Simons Investigator award from 
the Simons Foundation.} 
\dedicatory{In celebration of the 100th anniversary of Ramanujan's election to the Royal Society}
 \maketitle

\section{Introduction}  

\noindent This article is concerned with the problem of classifying tuples of natural numbers $a_1$, $\ldots$, $a_K$ and 
$b_1$, $\ldots$, $b_L$ with $\sum_i a_i =\sum_j b_j$ such that for all natural numbers $n$ one has 
$$ 
\frac{(a_1 n)! (a_2 n)! \cdots (a_K n)!}{(b_1 n)! (b_2 n)! \cdots (b_L n)!} \in {\Bbb N}.    
$$ 
Clearly we may assume that no $a_i$ equals $b_j$.   Further, it turns out that 
there are no solutions unless $L >K$, and that one can restrict attention to {\sl primitive} tuples such that the gcd of $(a_1, \ldots, a_K, b_1, \ldots, b_L)=1$.   
The condition $\sum_i a_i = \sum_j b_j$ guarantees that the factorial ratios grow only exponentially with $n$, so that the power series formed with these 
coefficients is a hypergeometric series.    We have in mind the situation when $D=L-K$ is a fixed positive integer, which is called the {\sl height} of the factorial ratio.

The general problem of describing such factorial ratios is largely open, with a complete solution being available only in the case of height $1$.  In this case, Rodriguez-Villegas \cite{RV} made 
the fundamental observation that the integrality of the factorial ratio is equivalent to the algebraicity of the associated hypergeometric function.  The work of Beukers and Heckman \cite{BH} gave a complete classification of such algebraic hypergeometric functions (which correspond to the instances where the associated monodromy group is finite).   This connection was made precise by Bober \cite{Bober, Bober2}, who showed that for $D=1$ there are three infinite families and fifty two sporadic examples.  One of these sporadic examples goes back to Chebyshev in 
connection with his works on prime numbers: for all $n\in {\Bbb N}$ 
$$ 
\frac{(30n)! n!}{(15n)! (10n)! (6n)!}  \in {\Bbb N}. 
$$ 
Bober's work confirmed a conjecture of Vasyunin  \cite{V} who had identified the three 
infinite families and fifty two sporadic examples in connection with a problem motivated by the Nyman--Beurling equivalent formulation of the Riemann Hypothesis.  In the recent paper \cite{S}, I gave a new elementary proof of the classification in the case $D=1$, which is independent of 
the results of Beukers and Heckman, and made partial progress on understanding larger values of $D$.

In this article we shall give a number of new examples of factorial ratios with $D\ge 2$.   Trivially, one can take two factorial ratios with $D=1$ and multiply these 
together to obtain an example with $D=2$.  The examples we give will be shown to be {\sl irreducible}; that is, not to arise in this fashion.  In particular, for $D=2$ we shall 
give more than fifty examples of irreducible two parameter families of integral factorial ratios.  Here is one such two parameter family: if $a$ and $b$ are coprime natural numbers 
with $a \ge 5b$ then for all $n\in {\Bbb N}$ we have 
$$ 
\frac{(6an)! (bn)!}{(2an)! (3an)! (6bn)! ((a-5b)n)! } \in {\Bbb N}. 
$$ 
Taking $b=1$, $a=5$ in the above example leads to the Chebyshev example with $D=1$.

Before we can describe our work, we must recapitulate the notation and some of the results from our earlier paper \cite{S}.   Let $\frak a = [a_1,\ldots, a_n]$ 
denote a list of $n$ non-zero integers.   We shall always assume that our lists are non-degenerate in the sense that $\frak a$ does not contain a pair 
of elements $a$, $-a$.   Given a non-degenerate list $\frak a$, we denote by $\ell(\frak a)$ the length of the list, by $s(\frak a)$ the sum of the elements 
$a_1+\ldots +a_n$, and by $h(\frak a)$ its height which is defined as the number of negative elements in $\frak a$ minus the number of positive elements.   
We call a list primitive if the gcd of all its elements equals $1$.   The order of elements in lists is irrelevant, and we will treat all 
permutations of a list as being the same.  Also, given a non-zero integer $k$, we denote by $k\frak{a}$ the list obtained by multiplying every element of $\frak a$ 
by $k$.

Let $\{ x\}  = x-\lfloor x\rfloor$ denote the fractional part of $x$, and let $\psi(x) = 1/2- \{x\}$ denote the ``saw-tooth function".   
To a list $\frak a$ we associate a $1$-periodic function ${\frak a}(x)$, defined as follows.  If $a_j x \not \in {\Bbb Z}$ for all $j$, put 
\begin{equation} 
\label{1.1} \frak a(x) = \sum_{j=1}^{n} \psi(a_j x), 
\end{equation} 
and extend $\frak a(x)$ to the remaining points by right continuity: $\frak a(x) = \frak a(x^+)$.  
We also define the ``norm" of $\frak a$ (which played a central role in the investigations of \cite{S}) by 
\begin{equation} 
\label{1.2} 
N([a_1,\ldots, a_n] ) = N(\frak a) = \int_0^1 {\frak a}(x)^2 dx = \frac 1{12} \sum_{i, j=1}^n \frac{(a_i, a_j)^2}{a_i a_j}. 
\end{equation}  
The last identity above follows from an easy calculation using Parseval's formula and the Fourier expansion of the saw-tooth function; see 
(2.1) of \cite{S}.

If $(a_1,\ldots, a_K, b_1,\ldots, b_L)$ is a $K+L$--tuple of natural numbers corresponding to an integral factorial ratio of height $D=L-K$, then we 
associate to this tuple the list $\frak a = [a_1, \ldots, a_K, -b_1, -b_2, \ldots, -b_L]$ which is a non-degenerate list with $\ell(\frak a) = 
K+L =2K +D$, $h(\frak a)=D$, and $s(\frak a) =0$.   The integrality of the factorial ratio is equivalent to the condition that $\sum_{i=1}^{K} \lfloor a_i x\rfloor 
- \sum_{j=1}^{L } \lfloor b_j x\rfloor \ge 0$ for all real numbers $x$.  This observation goes back to Landau, and is based on comparing the 
power of a prime $p$ dividing the numerator and denominator of the factorial ratio.  More precisely, the integrality of the factorial ratio is 
equivalent to $\sum_{i=1}^{K} \lfloor a_i x\rfloor 
- \sum_{j=1}^{L } \lfloor b_j x\rfloor $  taking values in the set $\{ 0, 1, \ldots, D\}$ for all real $x$, which is the same as requiring $\frak a(x)$ 
to take values in the set $\{ -D/2 +k: \ \ 0\le k\le D \}$.  Here it may be useful to note that $\sum_{i=1}^{K} \lfloor a_i x \rfloor - \sum_{j=1}^{L} \lfloor b_j x\rfloor$ is 
right continuous, which motivated our prescription of right continuity for $\frak a(x)$.

In the case $D=1$, the function $\frak a(x)$ is constrained to take just the two values $-1/2$ and $1/2$.  This permits an elegant characterization of integral factorial 
ratios of height $1$:  these correspond to lists $\frak a$ with odd length $\ell(\frak a)$, height $h(\frak a)=1$,  sum $s(\frak a)=0$, and with norm $N(\frak a) =1/4$.   Therefore the 
norm is a particularly valuable tool in understanding factorial ratios of height $1$, and forms the basis for the classification of such ratios in \cite{S}.    When $D\ge 2$, the 
norm alone does not characterize integral factorial ratios, but nevertheless it forms a useful starting point for the investigation of that problem.  If $\frak a$ corresponds to an 
integral factorial ratio with height $D$, then its norm $N(\frak a)$ must be $\le D^2/4$.  In \cite{S}, we showed (using this observation) that if $D=2$ then $K+L \le 80$, and 
that the points $(a_1,\ldots, a_K, b_1, \ldots, b_L) \in {\Bbb R}^{K+L}$ lie on finitely many vector subspaces of ${\Bbb R}^{K+L}$ of dimension at most $11$.

 Given two lists $\frak a_1$ and $\frak a_2$, we denote by $\frak a_1 + \frak a_2$ the list obtained by concatenating the two lists and removing any degeneracies.    
 If $\frak a_1$ and $\frak a_2$ correspond to integral factorial ratios with height $D_1$ and $D_2$ then $\frak a_1  + \frak a_2$ corresponds to an integral factorial ratio 
 of height $D_1+ D_2$.    This gives a trivial way of constructing factorial ratios of height larger than $1$, and the following definition is an attempt to distinguish such 
 examples from genuinely new examples of height larger than $1$.

\begin{definition}  A list $\frak a$ corresponding to a factorial ratio with height $D$ is called reducible if $\frak a = \frak b+ \frak c$ and 
$\frak b$ and $\frak c$ correspond to factorial ratios with smaller heights.   If $\frak a$ cannot be reduced in that way, then $\frak a$ is called 
irreducible. 
\end{definition} 

For example, the lists $[(a+b+c), -a, -b, -c]$ with $a$, $b$, $c$ being positive integers, correspond to multinomial coefficients, and thus give
examples of integral factorial ratios with height $2$.  These lists are all reducible since they may be decomposed as $[(a+b+c), -a, -(b+c)] + [(b+c), -b, -c]$; 
that is, the multinomial coefficient can be expressed as a product of binomial coefficients.  

We are now ready to present our results.  The first result provides a classification of all lists with height $2$ (that is, with two more negative entries than positive) 
and norm at most $1/3 +\delta$ for some small $\delta >0$. 

\begin{theorem}  \label{thm1} Let $\frak a$ be a primitive list of height $2$ with $N(\frak a) \le 1/3+\delta$ for some small $\delta >0$.  Then, apart from finitely many lists, 
$\frak a$ belongs to one of $28$ families described explicitly in Section 3.   There are two three parameter families, and $26$ two parameter families.   Sixteen 
of the families are reducible in the sense that every list of height $2$ in this family is a reducible list, and the remaining $12$ are irreducible in the sense that they 
contain infinitely many primitive lists of height $2$ that are irreducible.   All lists with height $2$ in the $28$ families give examples of integral factorial ratios with height $2$.  
\end{theorem}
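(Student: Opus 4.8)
The plan is to break the proof into three parts: (A) proving that the list of $28$ families is complete apart from finitely many lists; (B) checking that every height-$2$ member of these families is an integral factorial ratio; and (C) determining which families are reducible. For part (A) I would begin from the norm identity \eqref{1.2} together with the standing hypotheses $s(\frak a)=0$, $h(\frak a)=2$ and primitivity, and feed the assumption $N(\frak a)\le 1/3+\delta$ into the geometry-of-numbers argument of \cite{S}: for the weaker bound $N(\frak a)\le D^2/4=1$ that argument already yields $K+L\le 80$ and places $(a_1,\ldots,a_K,b_1,\ldots,b_L)$ on finitely many rational subspaces of dimension at most $11$, and under the much smaller bound $1/3+\delta$ the length bound drops sharply and the surviving subspaces have dimension at most $3$. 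Hence, outside a finite exceptional set, every such $\frak a$ arises from one of finitely many combinatorial \emph{templates} by substituting positive integers for at most three free parameters.

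The substantive part of (A) is then the finite — but lengthy — enumeration of these templates. I would organize it by how the elements of $\frak a$ split into groups governed by the various scaling parameters: as the parameters run through coprime values tending to infinity, the cross terms of \eqref{1.2} between elements in different groups tend to $0$, so $N(\frak a)$ tends to a sum of ``block norms'' of the deparametrized groups, while the heights of those groups, together with the contribution of any linking elements (genuine integer combinations of two parameters, as with $-(a-5b)$ in $[6a,b,-2a,-3a,-6b,-(a-5b)]$), add up to $h(\frak a)=2$. This reduces matters to assembling a bounded number of small-norm blocks — fixed small lists, among which the relevant height-$1$ integral ratios are the ones recalled in the introduction — and linking elements, subject to the limiting norm being $\le 1/3$, to $s(\frak a)=0$, and to primitivity. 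Working through all the possibilities should produce exactly the $28$ families, two carrying three parameters and $26$ carrying two; the finitely many lists not reached by this limiting analysis — non-generic parameter choices, degenerations, genuinely small cases — I would check by hand and absorb into the finite exceptional set.

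For part (B), the observation recalled in the introduction makes integrality of the ratio attached to $\frak a$ equivalent to $\sum_i\lfloor a_i x\rfloor-\sum_j\lfloor b_j x\rfloor\ge 0$ for all real $x$, i.e.\ to $\frak a(x)\ge -1$ for all $x$; for the reducible families this is free from part (C), since a concatenation of two height-$1$ integral ratios is again an integral ratio, while for the others, each given explicitly in the parameters, I would verify $\frak a(x)\ge -1$ by a finite analysis of the fractional parts $\{a_i x\}$ and $\{b_j x\}$. Part (C) I would carry out family by family. For the $16$ reducible families the task is to produce, uniformly in the parameters, a decomposition $\frak a=\frak b+\frak c$ into two height-$1$ integral factorial ratios — the splitting $[(a+b+c),-a,-b,-c]=[(a+b+c),-a,-(b+c)]+[(b+c),-b,-c]$ of the multinomial coefficient into binomials being the prototype. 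For the other $12$ families I would instead show that infinitely many of their primitive height-$2$ members admit no such decomposition: if $\frak a=\frak b+\frak c$ with $\frak b,\frak c$ height-$1$ integral ratios, then each is, up to scaling, one of the three infinite families of height-$1$ ratios or one of the $52$ sporadic ones; for large parameter values the unbounded elements of $\frak a$ must be matched by unbounded elements of $\frak b$ or $\frak c$, pinning their shapes, scalings and internal parameters down to finitely many cases, and in each case matching the remaining elements — or the requirement $s(\frak b)=s(\frak c)=0$ — gives a contradiction.

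The main obstacle I expect is part (A): the finiteness is inherited from \cite{S}, but making the template enumeration genuinely exhaustive, and keeping tight control over the finite exceptional set, is where the real work lies — one must be sure the norm bound leaves only subspaces of dimension $\le 3$ and that no combination of blocks and linking elements has been overlooked. A secondary difficulty is the irreducibility half of (C): since forming $\frak b+\frak c$ lets a positive element of one summand cancel a negative element of the other, excluding a decomposition means accounting for all such cancellations between the classified height-$1$ pieces — intricate, though finite.
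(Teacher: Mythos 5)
Your parts (A) and (B) are essentially the paper's own route. The ``cross terms between groups tend to $0$'' limiting analysis is exactly the $k$-separation machinery of \cite{S} that the paper invokes: Proposition \ref{prop2.1} uses the inequality $N(\frak a)\ge(1-1/k)(N(\frak b)+N(\frak c))$ twice to show that, outside a finite exceptional set, $\frak a=x_1\frak a_1+x_2\frak a_2+x_3\frak a_3$ with $N(\frak a_1)+N(\frak a_2)+N(\frak a_3)\le 1/3+2\delta$, and Section 3 then carries out precisely the block enumeration you describe, driven by the catalog of lists of norm below $31/180$ in Lemma \ref{lem2.1} (you will need that catalog, not just the height-$1$ ratios from the introduction, since blocks like $[1]$, $[1,-2]$, $[a,b]$, $[a,-2a,b]$ are not themselves factorial ratios). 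Your verification criterion $\frak a(x)\ge-1$ and the finite check over fractional parts is what Section 7 does for six of the families; the paper dispatches the other six via Theorem \ref{thm2}, but a direct check works equally well. The one place you genuinely diverge is the irreducibility half of (C): you propose to pin down a putative decomposition $\frak a=\frak b+\frak c$ by matching unbounded elements for large parameter values, whereas the paper (Lemmas \ref{lemr.1}--\ref{lemr.3} and Corollary \ref{cor1}) specializes a parameter to be a large multiple of a prime $p\ge11$ and exploits the fact that the sporadic height-$1$ ratios involve only the primes $2,3,5,7$, so that any dilate of a sporadic list either consists entirely of multiples of $p$ or contains none; counting and locating the multiples of $p$ then reduces the problem to a short list of congruence patterns. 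Your size-based matching can be made to work, but it must contend head-on with the cancellations you flag at the end (and with the case where all parameters grow at comparable rates, so that every element of $\frak a$ is unbounded); the prime-divisibility device sidesteps this by giving an invariant that survives cancellation, which is why the paper's case analysis stays finite and clean. Neither version is a gap, but if you pursue your route, the bookkeeping in (C) will be noticeably heavier than the paper's.
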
  

In theory it would be possible to determine the finitely many lists left unspecified in our theorem, but this might be computationally demanding (or even 
infeasible).   Just as the lists in the infinite families all gave examples of factorial ratios, we hazard the guess that the same property holds for the finitely many 
lists also; in other words, every primitive list of height $2$ and norm at most $1/3+ \delta$ for some small $\delta >0$ gives rise to an integral factorial ratio.    In contrast, a typical 
list of height $2$ from the family $[3a,18a,-a,-9a,-b,-11a+b]$ has norm very nearly $37/108=0.34259\ldots$, but one can find many examples in this family that do 
not correspond to integral factorial ratios (indeed, we believe that there are only finitely many primitive lists in this family that are integral factorial ratios).  

The other main result of this paper gives a way of constructing integral factorial ratios of height larger than $1$, and we shall use this method to exhibit many 
irreducible two parameter families with height $2$.  

\begin{definition}  A list $\frak b = [b_1, \ldots, b_k]$ is called monotone if the associated function $\sum_{i=1}^{k} \lfloor b_i x\rfloor$ (defined thus if $b_i x \not \in {\Bbb Z}$ for 
all $i$, and extended by right continuity to all $x$) is a monotone function of $x$.  
If $s(\frak b)$ is positive, then this associated function is monotone increasing, and if $s(\frak b)$ is negative then it is monotone decreasing. 
\end{definition} 

\begin{theorem} \label{thm2}  Suppose $\frak a$ and $\frak b$ are primitive lists, with $\frak b$ monotone, and such that $s(\frak a)$ and $s(\frak b)$ are both 
non-zero with $(s(\frak a), s(\frak b)) =1$.  
Suppose $s(\frak b) \frak a + (-s(\frak a)) \frak b$ is a list of height $D$ corresponding to an integral factorial ratio.   Then the lists with height $D+1$ that belong to the family 
$$ 
a \frak a + b \frak b + (-a s(\frak a) - bs(\frak b)) [1] 
$$ 
are integral factorial ratios.  
\end{theorem}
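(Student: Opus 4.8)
The plan is in three stages: reduce integrality to a one‑sided inequality for the sawtooth function; convert both that inequality and the hypothesis into exact statements about integer step functions, at which point the defining relation for the extra coefficient forces a cancellation; and then transport the hypothesis to a new point and close the gap using that $\mathfrak b$ is monotone. Write $\alpha=s(\mathfrak a)$, $\beta=s(\mathfrak b)$, $\gamma=a\alpha+b\beta$, and $\mathfrak c=a\mathfrak a+b\mathfrak b+(-\gamma)[1]$, so that $s(\mathfrak c)=0$ and $\mathfrak c$ is non-degenerate of height $D+1$. Replacing $\mathfrak c$ by $k\mathfrak c$ does not affect being a factorial ratio, so we may assume $(a,b)=1$. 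As recalled in the introduction, $\mathfrak c$ is an integral factorial ratio of height $D+1$ precisely when $\mathfrak c(x)\in\{-(D+1)/2+k:0\le k\le D+1\}$ for all $x$; and since $\sum_i\lfloor a_ix\rfloor-\sum_j\lfloor b_jx\rfloor$ added to its value at $1-x$ equals the height, the two extreme inequalities are equivalent, so it suffices to prove $\mathfrak c(x)\ge -(D+1)/2$ for all $x$. The hypothesis that $\mathfrak d:=s(\mathfrak b)\mathfrak a+(-s(\mathfrak a))\mathfrak b$ corresponds to an integral factorial ratio of height $D$ is, likewise, just the statement $\mathfrak d(x)\ge -D/2$ for all $x$ (the only feature used; if the genuine ratio is $-\mathfrak d$, run the argument below with the inequalities reversed).

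For a list $\mathfrak l$ put $L_{\mathfrak l}(y)=\sum_{k\in\mathfrak l}\lfloor ky\rfloor$, so that $\mathfrak l(y)=\tfrac12\ell(\mathfrak l)-s(\mathfrak l)\,y+L_{\mathfrak l}(y)$ at generic $y$, and $L_{\mathfrak l}(y+1)=L_{\mathfrak l}(y)+s(\mathfrak l)$. From $\mathfrak c(x)=\mathfrak a(ax)+\mathfrak b(bx)-\psi(\gamma x)$ and the relation $\gamma=a\alpha+b\beta$, the terms linear in $x$ cancel exactly, giving
$$\mathfrak c(x)=\tfrac12\big(\ell(\mathfrak a)+\ell(\mathfrak b)-1\big)+L_{\mathfrak a}(ax)+L_{\mathfrak b}(bx)-\lfloor\gamma x\rfloor,$$
and the same computation gives $\mathfrak d(x)=\tfrac12\big(\ell(\mathfrak a)+\ell(\mathfrak b)\big)+L_{\mathfrak a}(\beta x)+L_{\mathfrak b}(-\alpha x)$. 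Setting $N:=\tfrac12\big(\ell(\mathfrak a)+\ell(\mathfrak b)+D\big)$ (an integer, as $D\equiv\ell(\mathfrak a)+\ell(\mathfrak b)\bmod 2$), the goal becomes
$$L_{\mathfrak a}(ax)+L_{\mathfrak b}(bx)-\lfloor\gamma x\rfloor\ \ge\ -N\qquad(\forall x),$$
while the hypothesis reads $L_{\mathfrak a}(\beta x)+L_{\mathfrak b}(-\alpha x)\ge -N$ for all $x$.

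Now fix $x$ and an integer $m$, and put $x'=(ax-m)/\beta$, so $\beta x'=ax-m$ and hence $L_{\mathfrak a}(ax)=L_{\mathfrak a}(\beta x')+m\alpha$. Applying the hypothesis at $x'$ gives $L_{\mathfrak a}(ax)\ge -N+m\alpha-L_{\mathfrak b}(-\alpha x')$, so it suffices to exhibit, for each $x$, an integer $m$ with
$$m\alpha+L_{\mathfrak b}(bx)-L_{\mathfrak b}\!\Big(\frac{\alpha(m-ax)}{\beta}\Big)\ \ge\ \lfloor\gamma x\rfloor.$$
This is where monotonicity of $\mathfrak b$ is used. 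Since $L_{\mathfrak b}(y+1)=L_{\mathfrak b}(y)+\beta$, the quantity $m\alpha-L_{\mathfrak b}(\alpha(m-ax)/\beta)$ is invariant under $m\mapsto m+\beta$, so it takes only $|\beta|$ values, and because $(\alpha,\beta)=1$ these are governed by $L_{\mathfrak b}(t)-\beta t$ as $t$ ranges over $|\beta|$ equally spaced residues mod $1$. The plan is to average the displayed inequality over a residue system for $m$: the sum $\sum_m L_{\mathfrak b}(\alpha(m-ax)/\beta)$ is evaluated by a Hermite-type identity (it collapses to a single value of $L_{\mathfrak b}$ when $|\beta|$ is prime to the entries of $\mathfrak b$, the general case needing a small separate bookkeeping), and then the monotonicity of $L_{\mathfrak b}$ — whose increments all have the sign of $\beta$ — supplies exactly the one-sided comparison between $L_{\mathfrak b}(bx)$ and that averaged value that is needed to dominate $\lfloor\gamma x\rfloor$, the residual reducing to the hypothesis for $\mathfrak d$ (equivalently, to $\mathfrak d$ as the $\gamma=0$ boundary member of the family). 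Whichever of the $|\beta|$ terms in the averaged inequality is then non-negative furnishes the required $m$.

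I expect the real difficulty to be exactly this last step: after the Hermite collapse, showing that monotonicity of $L_{\mathfrak b}$ is enough to reach $\ge\lfloor\gamma x\rfloor$ with no slack, the budget being the tight identity $D/2+1/2=(D+1)/2$ — the half-unit coming from the range of the single new sawtooth term $\psi(\gamma x)$, which is precisely why the height rises by only one and why mere boundedness of $\mathfrak b$ would not suffice. Secondary technical matters to be dispatched are the sign cases for $\alpha$ and $\beta$ and the orientation of $\mathfrak d$ versus $-\mathfrak d$ (which also pins down the sign of $\gamma$, hence the sub-region of $(a,b)$, for which $\mathfrak c$ really has height $D+1$, matching the restriction in the statement to lists of height $D+1$), and the routine right-continuity conventions at the finitely many rational breakpoints of all these step functions.
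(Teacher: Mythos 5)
Your setup is sound as far as it goes---the reduction to the one-sided bound $\frak c(x)\ge -(D+1)/2$ via oddness, the identity $\frak c(x)=\frak a(ax)+\frak b(bx)-\psi(\gamma x)$ with the linear terms cancelling, and the transport of the hypothesis to $x'=(ax-m)/\beta$ using $L_{\frak a}(ax-m)=L_{\frak a}(ax)-m\alpha$---and it correctly isolates what must be proved: that for each $x$ there is an integer $m$ with
$$ m\alpha+L_{\frak b}(bx)-L_{\frak b}\Bigl(\tfrac{\alpha(m-ax)}{\beta}\Bigr)\ \ge\ \lfloor\gamma x\rfloor. $$
But this is exactly where the proof stops. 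The ``average over a residue system, apply a Hermite-type identity, then invoke monotonicity'' step is only a plan, and you yourself flag it as the expected difficulty; it is a genuine gap, not bookkeeping. Concretely, the two arguments of $L_{\frak b}$ differ by $(\alpha m-\gamma x)/\beta$, so the naive choice ($\alpha m$ just below $\gamma x$, say, so that monotonicity gives the $L_{\frak b}$-difference the right sign while $m\alpha\ge\lfloor\gamma x\rfloor$) requires a multiple of $\alpha$ in an interval of length $\{\gamma x\}<1$, which generally fails once $|\alpha|>1$. One is then forced into a quantitative comparison in which shifting $m$ by one changes $m\alpha$ by exactly $\alpha$ but changes $L_{\frak b}$ by an integer that equals $\alpha$ only on average over a full period $m\mapsto m+\beta$; an averaging argument produces an $m$ beating the mean, but it is not shown that the mean is $\ge\lfloor\gamma x\rfloor$, and it is unclear where monotonicity (rather than mere boundedness of $\frak b$) would enter such a computation. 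Since the target is tight (the budget is exactly the extra $1/2$ from $\psi(\gamma x)$), no lossy estimate can close this.

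For comparison, the paper sidesteps the integer search by proving the stronger \emph{two-variable} statement $\frak a(x)+\frak b(y)+\psi(-ux-vy)\in\{-(D+1)/2,\ldots,(D+1)/2\}$ with $u=s(\frak a)$, $v=s(\frak b)$, which specializes to every pair $(a,b)$ simultaneously. Substituting $y=-u(x+t)$ with $t$ a \emph{free real} parameter, the claim at the discrete points $t=k/(uv)$ reduces to the hypothesis by writing $k/(uv)=m/u+n/v$ (coprimality of $u,v$) and using periodicity; for $t$ between consecutive such points, monotonicity gives $\frak b(-u(x+t))\le\frak b(-u(x+k/(uv)))+uv(t-k/(uv))$, and the linear drift is cancelled \emph{exactly} by $\psi(uvt)$, costing precisely $1/2$. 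That local, exact compensation is what your averaging plan would need to reproduce; because your single-variable formulation couples $ax$ and $bx$ through one variable, the free parameter $t$ is unavailable and you are left with the problematic search for $m$. Reformulating in two independent variables would let your argument close along the paper's lines.
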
  

It is easy to check that the lists $[1,-k]$ (for any integer $k\ge 2$), $[1,-2,-k,2k]$ (for odd $k\ge 3$), $[1,-2,k]$ (for even $k\ge 4$), $[-1,2,3]$, $[2,-3,-4]$ are 
all monotone, and using these together with a knowledge of factorial ratios with height $1$, we give in Section 5 many examples of two parameter 
families of height $2$ arising  from Theorem \ref{thm2}.  Starting with these examples, and using Theorem \ref{thm2} repeatedly, one can produce 
three parameter families with height $3$ and so on.   In Section 6 we discuss the structure of reducible lists with height $2$, and use this 
to show that the examples produced in Section 5 with height $2$ along with the $12$ families mentioned in Theorem \ref{thm1} are all irreducible.

Finally, let us mention some other examples of integral factorial ratios with height larger than $1$.   In a {\sl Monthly} problem, Askey \cite{A} gives the two parameter, height $2$ family 
$[3 (m+n), 3n, 2m, 2n, -(2m+3n),-(m+2n),-(m+n),-m, -n ,-n]$, which we checked is irreducible using our work in Section 6.    Askey's example arose in the 
context of the Macdonald--Morris conjectures,  which in this context was resolved in the work of Zeilberger \cite{Z}.    
The Macdonald--Morris conjectures are intimately connected with the theory of the Selberg integral and provide further examples of integral factorial ratios; see \cite{FW} for further information in 
this direction.  For example, the root system $BC_n$ gives rise to a three parameter factorial ratio of height $n$ (see page 501 of \cite{FW}), giving in 
particular a three parameter family of height $2$.   Gessel \cite{G} discusses finding integral factorial ratios 
via combinatorial arguments.  In particular, Gessel gives a $4$ parameter family of height $3$ --  namely, $[(k+2\ell),(k+2m),(k+2n),(k+\ell+m+n), -k, -\ell, -m , -n, -(k+\ell+m),-(k+\ell+n), -(k+m+n)]$ -- along with several examples of $3$ parameter families of height $2$.  Wider \cite{W} gives examples of integral factorial ratios of height larger than $1$, and discusses the problem of showing whether such examples are reducible or not.  He gives the height $2$ family $[3a, -a, 3b, -b, -(a+b), -(a+b)]$, which he shows is irreducible.

 \section{Toward the proof of Theorem \ref{thm1}} 
 
 \noindent In this section we recall some results from \cite{S} on identifying lists with small norm, and set the stage for the proof of Theorem \ref{thm1}.  
 A key notion developed in \cite{S} is that of $k$-separated lists; see Definition 2.1 of \cite{S}.  Briefly, a primitive list $\frak a$ is called $k$-separated if there are 
 two primitive lists $\frak b$ and $\frak c$ with $1\le \ell(\frak b), \ell(\frak c) < \ell(\frak a)$ such that the following properties hold.   There are non-zero coprime integers $B$ and $C$ 
 such that $\frak a = B\frak b + C\frak c$.  Exactly one of $B$ or $C$ is divisible by $k$, and the other is coprime to $k$.  If $k|B$, then for all $kb \in B\frak b$ and $c\in C\frak c$ we 
 have $(kb,c) = (b,c)$, and an analogous criterion holds if $k|C$.    For a more fleshed out discussion of this definition, and examples, we refer to \cite{S}.  
 
 There are two key properties of this definition.  First, one can compute the norm of $\frak a$ in terms of the norms of $\frak b$ and $\frak c$:  in particular, one has from 
 Proposition 2.2 of \cite{S} that $N(\frak a) \ge (1-1/k)(N(\frak b) + N(\frak c))$.   Second, given $n$ and $k$, there are only finitely many primitive lists of length $n$ that 
 are at most $k$--separated (which means that the list is not $\ell$ separated for any $\ell >k$); see Proposition 2.4 of \cite{S}.  
 
 These two properties enable an inductive approach to classifying lists of small norm, and we now extract from \cite{S} some conclusions in this regard.

  \begin{lemma} \label{lem2.1}   Let $\frak a$ be a primitive list.   Then $N(\frak a) \ge 31/180$ except in the following cases: 
  $$
  \frak a = [1], \qquad [a,b], \qquad [a,-2a, b], \qquad [a,-2a, b,-2b],
  $$  
  $$
  \text{Norm  } \tfrac{17}{108}:  \qquad [1,-3,9], \qquad [1,-2, -3, 6,9 ,-18];
  $$ 
  $$
  \text{Norm  } \tfrac{1}{6}: [1, -3, -4], \ \ [3, 4, -12], \ \ [1, - 3, 6, -12], \ \ [1, -3, -4, 6], \ \ [1, -3, -4, 12], \ \ 
  $$
  $$
  [1,-2,4,-12], \ \ [1,-2,-3,4], \ \ [1,-2,-3,12], \ \ [1,-4,-6,12], \ \ [2,-3,-4,12], \ [3,-4,-6,12], \\  
  $$
  $$
  [1,-2,-3,4,6], \ \ [1,-2,-3,6,-12], \ \ [2,3,-4,-5,12], \ [1, -2,4,6,-12], \ [1,-2,-3,4,6,-12].
  $$
 \end{lemma}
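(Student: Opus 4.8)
The plan is to prove the lower bound $N(\frak a)\ge 31/180$ by strong induction on the length $\ell(\frak a)$, with the listed exceptions forming the base of the induction. That each exceptional family or sporadic list actually has norm below $31/180$ is a direct computation with the identity \eqref{1.2}; for instance $[5,6]$ has norm $\tfrac{1}{12}\bigl(2+\tfrac{1}{15}\bigr)=\tfrac{31}{180}$ (which is exactly why the threshold sits here), $[1,-3,9]$ has norm $\tfrac{17}{108}$, and $[1,-3,-4]$ has norm $\tfrac{1}{6}$. The base cases $\ell(\frak a)\le 2$ are immediate: the only primitive list of length $1$ is $[1]$, and the primitive lists of length $2$ are precisely the lists $[a,b]$. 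Thus the substance of the proof is to show that every primitive list of length $n\ge 3$ with $N(\frak a)<31/180$ appears in the statement.

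The first step uses the separation machinery of \cite{S} to reduce to short, weakly separated lists. Suppose $\frak a$ is $k$-separated, say $\frak a=B\frak b+C\frak c$ with $1\le\ell(\frak b),\ell(\frak c)<n$; then $N(\frak a)\ge(1-1/k)(N(\frak b)+N(\frak c))$ by Proposition 2.2 of \cite{S}. By the inductive hypothesis each of $N(\frak b),N(\frak c)$ is either at least $31/180$ or the norm of one of the exceptions, and inspecting those norms shows that the smallest value occurring is $\tfrac{1}{12}$, attained only by $[1]$, $[1,-2]$, $[2,-1]$, and that the next smallest value is $\tfrac{1}{9}$. Hence $N(\frak b)+N(\frak c)\ge\tfrac{1}{12}+\tfrac{1}{9}=\tfrac{7}{36}$ unless both $\frak b$ and $\frak c$ have norm exactly $\tfrac{1}{12}$. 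In the first case, if additionally $k\ge 9$ then $N(\frak a)\ge\tfrac{8}{9}\cdot\tfrac{7}{36}=\tfrac{14}{81}>\tfrac{31}{180}$, contradicting $N(\frak a)<31/180$; so such an $\frak a$ can only be at most $8$-separated. In the second case $\frak a$ has one of the shapes $B[1]+C[1,-2]=[B,C,-2C]$, $B[1,-2]+C[1,-2]=[B,-2B,C,-2C]$, or a couple of similar shapes built from $[2,-1]$; accounting for the degeneracies that can occur (which collapse $\frak a$ to a list of length at most $2$, already covered), one checks that in every surviving case $\frak a$ is of the form $[a,-2a,b]$ or $[a,-2a,b,-2b]$. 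In sum, we may now assume $\frak a$ is at most $8$-separated.

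It remains to treat primitive lists of length $n\ge 3$ that are at most $8$-separated with $N(\frak a)<31/180$. By Proposition 2.4 of \cite{S}, for each fixed $n$ there are only finitely many primitive lists of length $n$ that are at most $8$-separated; and since the length of a primitive list is bounded in terms of its norm (a result from \cite{S}, the same ingredient behind the inequality $K+L\le 80$ in the height-$2$ case, which could also be extracted from \eqref{1.2}), the length of $\frak a$ is bounded by an absolute constant, so the set of candidates is finite. One then enumerates the candidates, computes their norms, and keeps those with norm below $31/180$; the result is the list in the statement, the two infinite families contributing their members of small scale and the remaining sporadic lists of norm $\tfrac{17}{108}$ and $\tfrac{1}{6}$ appearing as the leftover cases. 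This search can be trimmed considerably: for a candidate that is $k$-separated with $2\le k\le 8$, the bound $N(\frak a)\ge(1-1/k)(N(\frak b)+N(\frak c))$ together with the inductive description of $\frak b$ and $\frak c$ already eliminates all but the smallest configurations.

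The step I expect to be the main obstacle is making this last, finite part fully rigorous. Two points require care. First, the length bound: although it is very plausible — and available from \cite{S} — that $N(\frak a)<31/180$ forces $\ell(\frak a)$ to be small, one genuinely needs such a bound for the enumeration to range over a finite set. Second, the bookkeeping in the reduction step: one must verify that every small-norm concatenation $B\frak b+C\frak c$ assembled from two exceptional pieces really does land back among the listed families, or else collapses through a degeneracy to a strictly shorter list, rather than escaping the classification. The collapsing phenomenon is precisely what makes the flexible families $[a,-2a,b]$ and $[a,-2a,b,-2b]$ — rather than more rigid configurations — the right objects to appear, and it is also what produces the sporadic norm-$\tfrac{1}{6}$ and norm-$\tfrac{17}{108}$ lists as residual cases; so this is the part of the argument where errors are easiest to make.
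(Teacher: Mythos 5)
Your outline follows the same route the paper itself takes for this lemma: verify the easy norm computations directly and lean on the separation machinery and explicit classification results of \cite{S} (the paper's proof is essentially a pointer to Section 4.3 and Lemmas 4.2, 7.1, 7.3, 7.4 there, together with the bounds for $G(n)$). Your reduction step is sound and the numerology checks out: $N([5,6])=\tfrac{31}{180}$ does pin the threshold; $(1-1/k)(\tfrac1{12}+\tfrac19)=\tfrac{k-1}{k}\cdot\tfrac{7}{36}$ exceeds $\tfrac{31}{180}$ exactly when $k\ge 9$, so a $\ge 9$-separated list of norm below $31/180$ must split into two pieces each of norm $\tfrac1{12}$, and these concatenations (after collapsing degeneracies, which only shorten the list) land precisely in the families $[a,b]$, $[a,-2a,b]$, $[a,-2a,b,-2b]$. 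The remaining lists are at most $8$-separated and of bounded length, hence finitely many. All of this is a faithful reconstruction of the strategy behind the cited results.

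The genuine gap is the one you identify yourself, and it should not be understated: the entire content of the lemma is the explicit catalog of sporadic lists of norms $\tfrac{17}{108}$ and $\tfrac16$, and ``enumerate the finitely many candidates and keep those of small norm'' is not a derivation of that catalog until the enumeration is actually performed. The crude length bound you invoke (the one behind $K+L\le 80$, or $\ell(\frak a)\ge 82\Rightarrow N(\frak a)>1$) leaves a search space far too large to call the argument complete; what \cite{S} actually does is establish sharper length-dependent lower bounds $G(n)$ on the norm (note the catalog contains nothing longer than $6$, and $G(8)=\tfrac{8}{45}>\tfrac{31}{180}$ already sits above the threshold), which confine the problem to a handful of lengths that are then classified one at a time in the cited lemmas. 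So your skeleton matches the source and contains no false step, but the catalog itself is asserted rather than derived; a complete proof must either reproduce that length-by-length analysis or, as the paper does, cite it.
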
 
 \begin{proof}  Clearly $N([1])=1/12$, and the three families $[a,b]$, $[a,-2a, b]$, and $[a,-2a, b,-2b]$ 
 give lists with norms close to $1/6$ once $|a|$ or $|b|$ is sufficiently large (and with $(a,b)=1$).  The remaining catalog of 
 lists follows from the work in \cite{S}: see there Section 4.3, Lemmas 4.2, 7,1, 7,3, 7.4, together with the bounds for $G(n)$ discussed 
 in Section 3. 
 \end{proof} 
 
 For future use, let us also record the first few smallest norms that are possible:
 $$
 \text{Norm } \tfrac{1}{12}:  [1], \ [1,-2];  \ \ \text{Norm } \tfrac{1}{9}: [1,-3],  \ [1,-2, -3, 6]; \ \ \text{Norm } \tfrac 18:  [1, -4], \ [1,-2,4].
 $$

 \begin{lemma} \label{lem2.2}  Let $\frak a$ be a primitive list with $s(\frak a)=0$.   If $\ell(\frak a)$ is odd, then $N(\frak a) \ge 1/4$.   If $\ell(\frak a)$ is even, 
 then $N(\frak a) \ge 31/180$ except for the two lists $[1,-2,-3,4]$ and $[1,-3,-4,6]$ which have norm $1/6$.   
 \end{lemma}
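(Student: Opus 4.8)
The plan is to treat the two parities of $\ell(\frak a)$ separately: the odd case by a direct integrality observation, and the even case by a finite bookkeeping exercise built on Lemma~\ref{lem2.1}.

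For the odd case, I would start from the elementary identity $\psi(t) = \tfrac12 - \{t\} = \tfrac12 - t + \lfloor t\rfloor$. Evaluating \eqref{1.1} at a point $x$ with $a_j x \notin {\Bbb Z}$ for every $j$ and summing this identity over the elements of $\frak a = [a_1,\ldots,a_n]$ gives
$$
\frak a(x) = \frac{\ell(\frak a)}{2} - s(\frak a)\, x + \sum_{j=1}^{n} \lfloor a_j x\rfloor = \frac{\ell(\frak a)}{2} + \sum_{j=1}^{n} \lfloor a_j x\rfloor,
$$
using $s(\frak a)=0$. Thus, away from the finite set of $x\in[0,1]$ where some $a_j x$ is an integer, $\frak a(x)$ differs from $\ell(\frak a)/2$ by an integer; when $\ell(\frak a)$ is odd this forces $\frak a(x)\in{\Bbb Z}+\tfrac12$, so $\frak a(x)^2\ge \tfrac14$ outside a set of measure zero. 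Integrating, $N(\frak a)=\int_0^1\frak a(x)^2\,dx\ge\tfrac14$, as claimed.

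For the even case, I would invoke Lemma~\ref{lem2.1}: if $N(\frak a)<31/180$ then $\frak a$ must appear in the catalog listed there. It then remains to intersect that catalog with the conditions ``non-degenerate, primitive, even length, sum zero''. The family $[a,-2a,b]$ and the sporadic lists of odd length are discarded immediately; in the family $[a,b]$ the condition $s(\frak a)=0$ gives $b=-a$, which is degenerate, and in $[a,-2a,b,-2b]$ the condition $s(\frak a)=-a-b=0$ again forces $b=-a$, making the list $[a,-2a,-a,2a]$ degenerate. Running through the remaining finitely many lists and checking lengths and sums, the only survivors are $[1,-2,-3,4]$ and $[1,-3,-4,6]$ — precisely the two even-length, zero-sum members of the ``Norm $1/6$'' portion of Lemma~\ref{lem2.1} — and a direct evaluation of the closed formula in \eqref{1.2} (or of $\int_0^1\frak a(x)^2\,dx$) confirms each has norm $1/6$. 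This completes the argument.

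I do not expect a genuine obstacle here. The only place demanding care is making the even-case enumeration exhaustive: one must verify that both even-length infinite families contribute nothing once non-degeneracy is imposed, and that among the sporadic norm-$1/6$ lists no even-length list other than the two stated ones has vanishing sum — a routine but mildly tedious check against the explicit list in Lemma~\ref{lem2.1}.
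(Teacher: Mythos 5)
Your proof is correct and follows essentially the same route as the paper: the odd case via the observation that $s(\frak a)=0$ forces $\frak a(x)$ to take half-integer values (so $\frak a(x)^2\ge 1/4$ pointwise), and the even case by filtering the catalog of Lemma \ref{lem2.1} through the conditions of non-degeneracy, primitivity, even length, and zero sum. Your write-up merely makes explicit the bookkeeping that the paper leaves to the reader.
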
 
 \begin{proof}  Note that if $\frak a$ is primitive with $s(\frak a) =0$ then $\ell(\frak a) \ge 3$.  If $\ell(\frak a)$ is odd, then $\frak a(x)$ takes values $k+1/2$ for an integer 
 $k$, which implies that $N(\frak a) \ge 1/4$.  If $\ell(\frak a)$ is even, then the lemma follows upon examining the lists in Lemma \ref{lem2.1}.  
 \end{proof}

 \begin{proposition} \label{prop2.1}   Let $\frak a$ be a primitive list with norm $\le 1/3+ \delta$, $s(\frak a) =0$, and $h(\frak a)=2$.   Then, apart from 
 finitely many exceptional lists, $\frak a$ lies in a space of the form 
 $$ 
 x_1 \frak a_1 + x_2 \frak a_2 + x_3 \frak a_3, \qquad\text{ with } x_1 s(\frak a_1) + x_2 s(\frak a_2) + x_3 s( \frak a_3) =0, 
 $$ 
 where $\frak a_1$, $\frak a_2$, $\frak a_3$ are primitive lists with $s(\frak a_i) \neq 0$ and $N(\frak a_1) + N(\frak a_2) + N(\frak a_3) \le 1/3+2\delta$.   
 \end{proposition}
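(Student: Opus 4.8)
The plan is to argue by induction on the length $\ell(\frak a)$, using the machinery of $k$-separated lists from \cite{S}. For the base of the induction I would invoke Lemma \ref{lem2.1} together with Proposition 2.4 and the bounds on $G(n)$ from \cite{S}: a primitive list with $N(\frak a)\le 1/3+\delta$ that is not $k$-separated for any $k\ge 2$ has length bounded by an absolute constant, and there are only finitely many such lists, all of which I would place among the allowed exceptions. So from now on $\frak a=B\frak b+C\frak c$ is $k$-separated for some $k\ge2$, with $\frak b,\frak c$ primitive of lengths strictly smaller than $\ell(\frak a)$, with $(B,C)=1$, and with $N(\frak a)\ge(1-\tfrac1k)(N(\frak b)+N(\frak c))$; indeed $N(\frak a)=N(\frak b)+N(\frak c)+E$ where the cross term $E$ is controlled by the separation and in particular becomes negligible as the multipliers $B,C$ grow.

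The next ingredient is the bookkeeping of sums and heights. From $s(\frak a)=0$ and $(B,C)=1$ we get $s(\frak b)=Ct$ and $s(\frak c)=-Bt$ for an integer $t$, while the heights satisfy $\mathrm{sgn}(B)\,h(\frak b)+\mathrm{sgn}(C)\,h(\frak c)=2$. I would treat the cases $t=0$ and $t\ne0$ separately. If $t=0$ then $\frak b$ and $\frak c$ are zero-sum lists, so each has length at least $3$, and up to sign $(h(\frak b),h(\frak c))$ is $(1,1)$, $(2,0)$, or $(0,2)$. Using Lemma \ref{lem2.2} — a zero-sum list of height $\pm1$ has norm $\ge 1/4$, an even-length zero-sum list has norm $\ge 1/6$, and there is no zero-sum list of height $2$ with norm $\le 1/6+\delta$ — together with the identity $N(\frak a)=N(\frak b)+N(\frak c)+E$, one finds that keeping $N(\frak a)\le 1/3+\delta$ forces $B$, $C$, $\frak b$ and $\frak c$ all to be bounded, so these $\frak a$ again lie in a finite set. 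Thus we may assume $t\ne0$, so that $s(\frak b),s(\frak c)\neq0$.

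With $t\ne0$ the list $\frak a$ already lies in the span of the two nonzero-sum lists $\frak b$ and $\frak c$, and the remaining task is to refine this to three generators with total norm at most $1/3+2\delta$. For this I would prove the inductive statement in a stronger form valid for primitive lists of arbitrary (possibly nonzero) sum: outside a finite set, such a list lies in the span of primitive lists of nonzero sum whose number is bounded in terms of its height and whose total norm differs from the list's norm by at most $\delta$. Feeding $\frak b$ and $\frak c$ into this and using $\mathrm{sgn}(B)\,h(\frak b)+\mathrm{sgn}(C)\,h(\frak c)=2$ to see that the two generator-counts combine to at most three — the configurations where they would not, such as a height-$0$ piece next to a height-$2$ piece, reducing to the bounded cases already disposed of — one assembles the decomposition $\frak a=x_1\frak a_1+x_2\frak a_2+x_3\frak a_3$. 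Here the condition $x_1s(\frak a_1)+x_2s(\frak a_2)+x_3s(\frak a_3)=0$ is automatic, being nothing but $s(\frak a)=0$, and the bound $N(\frak a_1)+N(\frak a_2)+N(\frak a_3)\le 1/3+2\delta$ comes from tracking the cross terms $E$ through the recursion, which are negligible for all but finitely many $\frak a$.

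The step I expect to be the main obstacle is precisely this last bookkeeping: controlling simultaneously the number of generators (exactly three) and their total norm ($\le 1/3+2\delta$), uniformly over all the separations that arise — including those with small $k$, where the estimate $N(\frak b)+N(\frak c)\le\frac{k}{k-1}N(\frak a)$ is too lossy and $E$ can be large. The resolution has to be a dichotomy in the spirit of the $G(n)$ analysis of \cite{S}: a list whose only available separations have small $k$ and a large cross term must either admit a finer separation, so that one re-chooses the decomposition, or else have bounded length, in which case it falls among the finitely many exceptions.
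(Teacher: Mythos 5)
Your outline has the right general shape --- bound the length via the norm, split $\frak a$ using $k$-separation, and dispose of zero-sum pieces with Lemma \ref{lem2.2} --- but the step you yourself flag as ``the main obstacle'' is the actual content of the proof, and your proposed resolution does not close it. The mechanism that works is: fix $K=\lceil 2/\delta\rceil$; since the norm bound forces $\ell(\frak a)$ to be bounded, Proposition 2.4 of \cite{S} says there are only finitely many primitive lists of these lengths that are at most $K$-separated, and these all go into the exceptional set. Hence \emph{every} separation one ever needs to invoke (of $\frak a$, and later of $\frak b$ or $\frak c$) can be taken with $k\ge 2/\delta$, so that $N(\frak b)+N(\frak c)\le \tfrac{k}{k-1}N(\frak a)\le (1+\delta)N(\frak a)$ and the lossy small-$k$ case simply never arises. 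You gesture at a dichotomy of this kind but never state the finiteness input that makes it work; ``bounded length, hence finitely many exceptions'' is not enough on its own, since bounded length still admits infinitely many primitive lists, and your base case (lists admitting no separation at all) is much narrower than what must be discarded (lists admitting only separations with $k\le K$).

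The second, unacknowledged gap is how one gets \emph{exactly three} generators while keeping the exceptional set finite. The proof splits exactly twice: $\frak a=B\frak b+C\frak c$, and then splits whichever of $\frak b,\frak c$ is itself at least $2/\delta$-separated. The case where neither can be so split is absorbed into the exceptional set, and the reason finitely many pairs $(\frak b,\frak c)$ give only finitely many $\frak a$ is that once $s(\frak b),s(\frak c)\ne 0$, primitivity of $\frak a$ together with $s(\frak a)=0$ forces $C=\pm s(\frak b)/(s(\frak b),s(\frak c))$ and $B=\mp s(\frak c)/(s(\frak b),s(\frak c))$, so the multipliers are determined. Your ``stronger inductive statement'' with a generator count ``bounded in terms of the height'' supplies neither the count of three nor this determination of $B$ and $C$, and without the latter finitely many generator-lists still leave infinitely many candidate $\frak a$. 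A slip of the same kind occurs in your $t=0$ case: nothing in the norm constraint bounds $B$ and $C$ there (the separation inequality is consistent with arbitrarily large multipliers), so the conclusion should not be ``a finite set'' but rather impossibility --- removing degenerate pairs $a,-a$ preserves the height, so two pieces that are each forced by Lemma \ref{lem2.2} to be the height-zero lists $[1,-2,-3,4]$ or $[1,-3,-4,6]$ can never combine to a list of height $2$. Finally, one must also rule out that the second-stage pieces $\frak d,\frak e$ have sum zero; the paper does this with Lemma \ref{lem2.2} and a short height computation, whereas your stronger inductive hypothesis simply assumes it.
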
 
 \begin{proof}   From \cite{S} we know that if $\ell(\frak a)$ is sufficiently large, then $N(\frak a)$ is also large; for example, if $\ell(\frak a) \ge 82$ then 
 $N(\frak a) >1$.   Thus we can restrict attention to lists $\frak a$ with bounded length, and so after excluding finitely many primitive lists, we may 
 assume that $\frak a$ is $k$-separated for some $k \ge 2/\delta$.  Thus, by the definition of $k$-separated, 
 we can find two primitive lists $\frak b$ and $\frak c$ such that $\frak a = B\frak b + C\frak c$, and $N(\frak b) + N(\frak c) \le N(\frak a) (k/(k-1)) 
 \le N(\frak a)(1+\delta)$.   
 
 If either $s(\frak b)$ or $s(\frak c)$ is zero, then (because $s(\frak a)=0$) the other must also be zero.   
If $s(\frak b) = s(\frak c) =0$, then Lemma \ref{lem2.2} would imply that $\frak b$ and $\frak c$ would have to be either $[1,-2,-3,4]$ or $[1, -3, -4, 6]$, 
but in all these cases it is not possible for $\frak a = B\frak b + C\frak c$ to have height $2$.  

Therefore, we may suppose that $s(\frak b)$ and 
$s(\frak c)$ are both non-zero.   Since $\frak a = B\frak b + C\frak c$ is primitive, we must have $C= \pm s(\frak b)/(s(\frak b), s(\frak c))$ and $B = 
\mp s(\frak c)/(s(\frak b),s(\frak c))$, so that $\frak a$ is determined uniquely by $\frak b$ and $\frak c$.    If both $\frak b$ and $\frak c$ are 
at most $\lceil 2/\delta \rceil$-separated, then there would only be finitely many possibilities for $\frak b$ and $\frak c$, and therefore only 
finitely many choices for $\frak a$.  

Suppose then that $\frak c$ is at least $2/\delta$--separated.   Now $\frak c$ must decompose as $\frak c = D\frak d + E \frak e$, where 
$\frak d$ and $\frak e$ are primitive lists with $N(\frak d) + N(\frak e) \le N(\frak c)(1+\delta)$.   Renaming $\frak b$ as $\frak a_1$, $\frak d$ as 
$\frak a_2$, and $\frak e$ as $\frak a_3$ we conclude that $\frak a$ is of the form $x_1 \frak a_1 + x_2 \frak a_2 + x_3 \frak a_3$ as desired, and 
that $N(\frak a_1) + N(\frak a_2) + N(\frak a_3) \le N(\frak b) + (1+\delta) N(\frak c) \le (1+\delta)^2 N(\frak a) \le \frac 13+2\delta$.

There is one final remaining point.  We know that $s(\frak a_1) (=s(\frak b)) \neq 0$, but it is conceivable that one of $s(\frak a_2)$ or $s(\frak a_3) =0$; 
say, $s(\frak a_3)=0$.  To rule this scenario out, note that by Lemma \ref{lem2.2} we must then have $\frak a_3 = [1,-2,-3,4]$ or $[1,-3,-4,6]$ and 
thus $N(\frak a_3)=1/6$.  This forces $N(\frak a_1) + N(\frak a_2) \le 1/6+2\delta$, which implies that $\frak a_1$ and $\frak a_2$ must be $[1]$ or $[1,-2]$.   
Since $\frak a$ has height $2$, we are further forced to have $\frak a_1 = \frak a_2 =[1]$, but now we must have $x_1 = -x_2$ in order to have $s(\frak a) =0$, 
and the resulting $\frak a$ has height $0$.  Therefore $s(\frak a_i) \neq 0$ for $i=1$, $2$, $3$, and the proof of the proposition is complete. 
 \end{proof}

 \section{Proof of Theorem \ref{thm1}:  Restricting to $28$ families}  
 
 \noindent If $\frak a$ is a primitive list of height $2$ with $s(\frak a)=0$ and  $N(\frak a) \le 1/3+\delta$, then apart from finitely many 
exceptions, we know (by Proposition \ref{prop2.1}) that $\frak a$ is of the form 
\begin{equation} 
\label{3.1}  x_1 \frak a_1 + x_2 \frak a_2 + x_3 \frak a_3, \qquad \text{with } x_1 s(\frak a_1) + x_2 s(\frak a_2) + x_3 s(\frak a_3) = 0,
\end{equation} 
where $N(\frak a_1) + N(\frak a_2) + N(\frak a_3) \le 1/3+ 2\delta$.  In this section, we classify all the possibilities for $\frak a_1$, $\frak a_2$, $\frak a_3$ 
satisfying this bound.  

Naturally we may suppose that $N(\frak a_1) \le N(\frak a_2) \le N(\frak a_3)$.   It follows that $N(\frak a_1) \le 1/9+ \delta$, so that $N(\frak a_1)$ is either $1/12$ 
or $1/9$.   If $N(\frak a_1) =1/9$, then both $N(\frak a_2)$ and $N(\frak a_3)$ are also forced to be $1/9$, so that $\frak a_1$, $\frak a_2$, $\frak a_3$ are 
all either $[1,-3]$, or $[1,-2,-3, 6]$.   But in this case, it is not possible to have $h(\frak a) =2$.   We conclude that $N(\frak a_1) = 1/12$, so that $\frak a_1$ is 
either $[1]$ or $[1,-2]$.  

Next we must have $N(\frak a_2) \le (1/3+2\delta - 1/12)/2 = 1/8+\delta$, so that we must be in one of the following three cases: 
 $$ 
\text{Case I}:  \qquad \frak a_2= [1], [1,-2] \qquad N(\frak a_2)= 1/12,
$$ 
$$ 
\text{Case II}:  \qquad \frak a_2 = [1,-3], [1,-2,-3,6] \qquad N(\frak a_2) =1/9, 
$$  
$$
\text{Case III}:  \qquad \frak a_2  = [1,-4], [1,-2, 4] \qquad N(\frak a_2) =1/8.
$$

\subsection{Case I analysis}  Note that $N(\frak a_3) \le 1/3 + 2\delta -1/12 -1/12 = 1/6+2\delta$, and Lemma \ref{lem2.1} now gives the various possibilities 
for $\frak a_3$.    

If $\ell(\frak a_3) = 1$ (so that $\frak a_3=[1]$) or $\ell(\frak a_3)= 2$ (so that $\frak a_3=[a,b]$ with for coprime integers $a$ and $b$) then  the resulting 
lists $\frak a$ are all included in the family of multinomial coefficients 
\begin{equation} 
\label{3.1} 
[a+b+c, -a, -b, -c] = [a+b+c, -a, -b-c] + [b+c,-b,-c].  
\end{equation} 
This is a three parameter family, which is clearly reducible to two binomial coefficients.   

\smallskip

Now suppose that $\ell(\frak a_3)=3$.    Here $\frak a_3$ must be of the form $[a,-2a, b]$, or one of $[1, -3,9]$, $[1,-3,-4]$,  $[3, 4, -12]$.  Since $\frak a$ must 
have height $2$, we are forced to have one of $\frak a_1$ or $\frak a_2$ be $[1]$ and the other $[1,-2]$; say, $\frak a_1 = [1]$ and $\frak a_2=[1,-2]$.    
If $\frak a_3$ is of the form $[a,-2a, b]$, then a little calculation shows that $\frak a$ must belong to the three parameter, reducible family 
\begin{equation} 
\label{3.2} 
[2a, -a, 2b, -b, -c, -(a+b-c)] = [2a, -a, 2b, -b, -a-b]  +  [a+b, -c, - (a+b-c)].
\end{equation}   
In order for the left side of \eqref{3.2} to be a list of height $2$, we must have $c$ being a positive integer with $c < a+b$.  In the sequel, such conditions 
will be left implicit.   

The three other cases of length $3$, namely $\frak a_3= [1, -3, 9]$, $[1,-3,- 4]$, or $[3,4,-12]$ lead to the following three reducible, two parameter, families:  
\begin{equation} 
\label{3.3} 
[3a, -a, -9a, 2b, -b, 7a-b]  = a [3, 14, -1, -7, -9] + [7a, 2b, -14a, -b, 7a-b],  
\end{equation} 
 \begin{equation}
\label{3.4} 
[a,-3a,-4a, 2b, -b, 6a-b]   = a  [1, 12, -3, -4, -6] + [6a, 2b, -b, -12a, 6a-b], 
\end{equation} 
\begin{equation} 
\label{3.5} 
[12a, -3a, -4a, 2b,-b, -(b+5a)] = a[12, 5, -3,-4,-10] + [10a, -5a, 2b, -b, -b-5a]. 
\end{equation}

\medskip 

Now suppose $\ell(\frak a_3)=4$.  Then $\frak a_3 =[2a, 2b, -a,-b]$, or is given by one of the seven lists with length $4$, sum not equal to $0$, and norm $1/6$ given in Lemma \ref{lem2.1}.   In all these cases $\frak a_3$ has height $0$, and therefore we must have $\frak a_1 = \frak a_2 = [1]$.   
 The case $\frak a_3=[2a, 2b, -a, -b]$ leads to the family $[2a, 2b, -a, -b, -c, -d]$ with $c+d=a+b$, which is already included above, see \eqref{3.2}.     Thus we are 
 left with the following seven possibilities for $\frak a_3$:  
$$ 
[1,-3,6,-12]; \ \ [1, -3, -4, 12]; \ \ [1,-2, 4, -12];
$$
$$
  [1, -2, -3, 12]; \ \ [1,-4, -6, 12]; \ \ [2, -3, -4, 12]; \ \ [3, -4, -6, 12]. 
$$
Each of these seven lists may be completed to a five term list with sum $0$ which corresponds to a factorial ratio.   Therefore the families for $\frak a$ that we obtain from these lists are then all reducible, arising from one of these sporadic lists (with $D=1$) combined with a binomial coefficient.   Thus we obtained five new reducible, two parameter, families: 
\begin{equation} 
\label{3.6} 
[3a, 12a, -a, -6a, -b, -(8a-b)] =a[3,12,-1,-6,-8] + [8a, -b, b-8a], 
\end{equation} 
\begin{equation} 
\label{3.7} 
[2a,12a,-a,-4a, -b, -(9a-b)] = a[2,12, -1,-4,-9] + [9a, -b, b-9a],
\end{equation} 
\begin{equation} 
\label{3.8} 
[a,12a, -2a, -3a, -(8a-b)] =a [1,12, -2, -3, -8] + [8a,-b,b-8a],
\end{equation} 
\begin{equation} 
\label{3.9}
[2a,12a,-3a,-4a,-b,-(7a-b)] = a[2,12, -3,-4,-7] + [7a,-b,b-7a], 
\end{equation} 
\begin{equation} 
\label{3.10} 
[3a,12 a, -4a, -6a, -b, -(5a-b)] = a[3, 12, -4, -5, -6] + [5a,-b,b-5a]. 
\end{equation} 

\medskip

Now suppose $\ell(\frak a_3)= 5$, so that by Lemma \ref{lem2.1}, $\frak a_3$ must be one of the following four lists:
$$ 
[1,-2,-3,4,6]; \ \ [1,-2,-3,6,-12]; \ \ [2,3,-4,-6,12]; \ \ [1,-2, 4,6,-12]. 
$$ 
In all these cases we may suppose that $\frak a_1 = [1]$ and $\frak a_2 = [1,-2]$ because $\frak a$ must have height $2$.   
Then these four cases lead to the following reducible, two parameter, families: 
\begin{equation} 
\label{3.11} 
[2a, 3a, -a, - 4a, -6a, 2b, -b, 6a-b] = a [2, 3, 12, -1, -4, -6, -6 ] + [2b, -b, 6a, -12 a, 6a-b],
\end{equation} 
\begin{equation} 
\label{3.12}  
[a,6a,-2a,-3a,-12a, 2b,-b,10a-b] =a[1, 6, 20, -2, -3, -10, -12] + [2b, -b, 10a, -20a, 10a-b],
\end{equation} 
\begin{equation} 
\label{3.13} 
[4a, 6a, -2a, -3a, -12 a, 2b, -b, 7a-b] = a[4, 6, 14, -2, -3, -7,-12] + [7a , -14a, 2b, -b, 7a-b], 
\end{equation} 
 \begin{equation} 
\label{3.14} 
[2a, 12a, -a, -4a, -6a, 2b, -b, -3a-b] = a[2,3,12,-1,-4,-6,-6] + [6a,-3a,2b,-b,-3a-b].
\end{equation}

\medskip 

By Lemma \ref{lem2.1}, the last remaining cases are when $\ell(\frak a_3)=6$, and $\frak a_3$ is either  
$[1,-2,-3,6,9,-18]$, or  $[1,-2,-3,4,6,-12]$.   Since these lists have height $0$, we must have $\frak a_1= \frak a_2 =[1]$.  
Each of these possibilities for $\frak a_3$ can be completed to a $7$ term list with sum $0$, which forms a factorial ratio with $D=1$.  Thus, 
we get two more reducible, two parameter, families: 
\begin{equation} 
\label{3.15} 
[2a, 3a, 18a, -a, -6a, -9a, -b, b-7a] =a[2, 3, 18, -1, -6, -7, -9] + [7a, -b, b-7a], 
\end{equation} 
 \begin{equation} 
\label{3.16} 
[2a,3a, 12a, -a, -4a,-6a, -b, b-6a] =a[2, 3, 12, -1, -4, -6, -6] + [6a, -b, b-6a]. 
\end{equation} 

Thus Case I led to sixteen families of solutions, all of which are reducible.

\subsection{Case II analysis}  

Here $\frak a_2 = [1,-3]$ or $[1,-2,-3, 6]$ has norm $1/9$, so that $\frak a_3$ has 
norm in the range $1/9$ to $5/36$.  The possibilities for $\frak a_3$ are thus limited to the examples in Lemma \ref{lem2.1}, and indeed to just the cases 
$[a,b]$, $[a,-2a, b]$ and $[a,-2a, b,-2b]$.   The length $4$ case is ruled out as its height is $0$, and it would be impossible to have $\frak a$ of height $2$.  
The case $\frak a_3 =[a,b]$ can only arise with $a$ and $b$ of opposite sign (else the norm will exceed $1/6$), and again it is impossible to have $\frak a$ of 
height $2$.   We are left with $\frak a_3$ being of the form $[a,-2a,b]$, which gives the following five possibilities: 
 $$ 
[1,-2,4], \ \ [1,-2, -3], \ \ [2, 3, -6], \ \ [1, -3, 6], \ \ [1, -2, 6]. 
$$ 
In order for $\frak a$ to have height $2$, we must have $\frak a_1 =[1]$, and thus each of these five possibilities gives rise 
to two families for $\frak a$, corresponding to the two choices, $[1,-3]$ and $[1,-2,-3,6]$, for $\frak a_2$.   Going over the five possibilities for $\frak a_3$ in 
order, we find the following $10$ two parameter families:  
 \begin{equation}
 \label{3.17} 
[2a, b, 6b, -a, -4a, -2b, -3b, -(2b-3a)],
\end{equation} 
\begin{equation} 
\label{3.18} 
[2a, 3b, -a, -4a, -b, -(2b-3a)],  
\end{equation} 
\begin{equation} 
\label{3.19} 
[a, b, 6b, -2a, -3a, -2b, -3b, -(2b-4a)], 
\end{equation} 
\begin{equation}
\label{3.20} 
[a, 3b, -2a, -3a, -b, -(2b-4a)],
\end{equation} 
\begin{equation} 
\label{3.21}
[6a, 2b, 3b, -2a, -3a, -b, -6b, 2b-a],
\end{equation} 
\begin{equation} 
\label{3.22} 
[6a, b, -2a, -3a, -3b, 2b-a],
\end{equation} 
\begin{equation} 
\label{3.23}
[3a, b, 6b, -a, -6a, -2b, -3b, 4a-2b],
\end{equation} 
\begin{equation} 
\label{3.24}
[3a, 3b, -a, -6a, -b, 4a-2b], 
\end{equation} 
\begin{equation} 
\label{3.25}
[2a, b, 6b, -a, -6a, -2b, -3b, 5a-2b],
\end{equation} 
\begin{equation} 
\label{3.26}
[2a, 3b, -a, -6a, -b, 5a-2b]. 
\end{equation}

\subsection{Case III analysis}  

Here both $\frak a_2$ and $\frak a_3$ are either $[1,-4]$ or $[1,-2,4]$.  
Both cannot be $[1,-4]$, since then $\frak a$ cannot have height $2$.   So there are two cases: 
both are $[1, -2, 4]$ and $\frak a_1 = [1,-2]$; or $\frak a_1= [1]$, $\frak a_2= [1,-4]$, and $\frak a_3= [1,-2,4]$.  
These lead to two further two parameter families: 
\begin{equation} 
\label{3.27} 
[2a, 4b, -a, -4a, -b, 3a-3b],
\end{equation} 
\begin{equation} 
\label{3.28} 
[2a, 2b, 6(a+b), -a, -4a, -b, -4b, -3(a+b)].
\end{equation}

\medskip 

To sum up, we have shown that (apart from finitely many exceptions) lists of height $2$ and norm at most $1/3+\delta$ 
must belong to one of the $28$ families catalogued above.  The $16$ families of Section 3.1 are reducible, and every element in 
them with height $2$ corresponds to an integral factorial ratio.  To complete the proof of Theorem \ref{thm1}, it remains 
to show that the $12$ families described in Sections 3.1 and 3.2 are irreducible (see Corollary \ref{cor1}), and that lists of height $2$ in these families correspond to 
integral factorial ratios (see Section 7).

\section{Proof of Theorem \ref{thm2}} 

\noindent We begin by recalling that to any list $\frak a = [a_1,\ldots, a_n]$, we associate the function 
$\frak a(x) = \sum_{i=1}^{n} \psi(a_i x)$ (away from points $a_i x\in {\Bbb Z}$), which is odd and periodic with period $1$.  If the list $\frak a$ has 
sum $0$ and height $D$, then it is an integral factorial ratio precisely when $\frak a(x)$ takes values in the set $\{ -D/2, -D/2 +1, \ldots, D/2\}$.  In the sequel, we 
shall check this criterion for $\frak a(x)$ implicitly keeping $x$ away from points of discontinuity; right continuity will then ensure the result for all $x$.

For brevity, put $u= s(\frak a)$ and $v= s(\frak b)$.   The assumption that $v\frak a - u \frak b$ is an integral factorial ratio of height $D$ 
implies that for all real $x$ 
\begin{equation} 
\label{4.1} 
\frak{a}(vx) + \frak b(-ux) \in \{ -D/2, -D/2 +1, \ldots, D/2\}.
\end{equation} 
We shall show that for all $x$ and $y$ one has 
\begin{equation} 
\label{4.2} 
\frak a(x) + \frak b(y) + \psi(-ux-vy) \in \{ -(D+1)/2, -(D-1)/2, \ldots, (D+1)/2\}.
\end{equation} 
The theorem then follows upon applying the Landau criterion to compute the power of a prime 
dividing the numerator and denominator of the claimed integral factorial ratio.

Replacing $x$ by $vx$, and $y$ by $-u(x+t)$, we see that \eqref{4.2} is equivalent to the assertion that 
\begin{equation} 
\label{4.3} 
\frak a (vx) + \frak b(-u(x+t)) + \psi( uv t) \in \{ -(D+1)/2, -(D-1)/2, \ldots, (D+1)/2\}.
\end{equation} 
Using the monotonicity of $\frak b$, we shall reduce the above assertion to proving 
(for all real $x$ and integers $k$) 
\begin{equation} 
\label{4.4} 
\frak a(vx) + \frak b (- u(x+k/uv)) \in \{ - D/2, \ldots, D/2\}. 
\end{equation} 
Postponing the proof of this reduction, we now establish \eqref{4.4}.  Since $u$ and $v$ are coprime, we may 
write $k/uv = m/u + n/v$ for suitable integers $m$ and $n$.  Then, since $\frak a(vx)$ is periodic in $x$ with period $1/v$ 
and analogously for $\frak b(-ux)$,  
$$ 
\frak a(vx) + \frak b(-u(x+m/u+n/v)) = 
\frak a(vx) + \frak b(-u (x+n/v)) = \frak a(v(x+n/v)) + \frak b(-u(x+n/v)), 
$$ 
and so \eqref{4.4} follows from the assumption \eqref{4.1}.

We now prove that \eqref{4.4} implies \eqref{4.3}.  
The left side of \eqref{4.3} takes values in the set $(D+1)/2 + {\Bbb Z}$, and changes sign when $(x,t)$ is replaced 
by $(-x,-t)$.  Therefore it suffices to establish that the left side of \eqref{4.3} always takes values $\ge -(D+1)/2$, or 
that it always takes values $\le (D+1)/2$.    

Suppose that $uv >0$.  Here we shall show from \eqref{4.4} that the left side of \eqref{4.3} always takes 
values $\le (D+1)/2$.  In the case $uv <0$, the analogous argument shows that the left side of \eqref{4.3} always 
takes values $\ge -(D+1)/2$.   Suppose $k/uv \le t < (k+1)/uv$.  From the monotonicity of $\frak b$ (and note that 
the associated function in Definition 1.3 is increasing or decreasing depending on the sign of $v$) we see that 
$$ 
\frak b(-u(x+t)) + \psi(uvt) \le \frak b(-u(x+k/(uv))) + uv (t-k/uv) + \psi(uv t) = 1/2 + \frak b(-u(x+k/uv)). 
$$ 
Therefore, given \eqref{4.4} it follows that 
$$ 
\frak a(vx) + \frak b(-u(x+t)) + \psi(uv t) \le \frak a(vx) + \frak b(-u(x+k/uv)) + 1/2 \le (D+1)/2, 
$$
as needed.

This completes our proof of Theorem \ref{thm2}.  

\section{Examples arising from Theorem \ref{thm2}} 

\noindent In this section we give examples of factorial ratios of height $2$ obtained using 
Theorem \ref{thm2}.   The table gives a monotone list $\frak b$, a primitive list $\frak a$, 
and these lists satisfy the condition $(s(\frak a), s(\frak b)) =1$, and the table also displays 
the list $s(\frak b) \frak a - s(\frak a) \frak b$ which corresponds to an integral factorial ratio with height $1$.   
Thus each line of the table produces a two parameter family of integral factorial ratios with height $2$; for example, line 10 
shows that $[6a,-2a,-3a,b,-6b,-(a-5b)]$ is a factorial ratio of height $2$ provided $a > 5b$.  
We have not included in this table six further examples of Theorem \ref{thm2}; namely, the examples 
corresponding to the families \eqref{3.17}, \eqref{3.18}, \eqref{3.21}, \eqref{3.22}, \eqref{3.25}, and \eqref{3.26}. 

\vfill
\begin{center} 
\begin{tabular} {|c|| c| c| c|}
\hline
Number & List $\frak a$ & Monotone  $\frak b$ & Height $1$ factorial ratio \\
\hline
$1$ & $[2,-3,-4]$ & $[3, -1]$ & $[4,15,-5,-6,-8]$ \\ 
$2$ & $[10,-5,-6]$ & $[3,-1]$ & $[3,20,-1,-10,-12]$ \\ 
$3$ &$[6,-3,-4]$ &$[3, -1]$ & $[3,12,-1,-6,-8]$ \\  
$4$ &$[10,-2,-5]$& $[3, -1]$ & $[3,20,-4,-9,-10]$ \\ 
$5$ & $[10,-4,-5]$ & $[3, -1]$ & $[1,20,-3,-8,-10]$ \\ 
$6$ & $[6,-1,-4]$ & $[3, -1]$ & $[1,12, -2, -3,-8]$  \\ 
$7$ & $[4,-1,-2]$ & $[3, -1]$ & $[1,12,-3,-4,-6]$ \\  
$8$ & $[6,-3,-4]$ & $[4, -1]$ & $[4,18,-1,-9,-12]$ \\  
$9$ & $[6,-2,-3]$ & $[5, -1]$ & $[1,24,-5,-8,-12]$ \\ 
$10$ & $[6,-2,-3]$ & $[6, -1]$ & $[1,30,-6,-10,-15]$ \\ 
$11$ & $[3,10,-1,-5,-6]$ & $[3, -1]$ & $[1,6,20, -2,-3,-10,-12]$ \\ 
$12$ & $[2, 15,-1,-5,-6]$  & $[3, -1]$ & $[4,5,30,-2,-10,-12,-15]$ \\
$13$ & $[2,9,-1,-3,-4]$ & $[3, -1]$ & $[3, 4, 18, -2,-6,-8,-9]$ \\ 
$14$ & $[1,6,-2,-3,-3]$ & $[3, -1]$ & $[2,3,12,-1,-4,-6,-6]$ \\ 
$15$ & $[1,10,-3,-4,-5]$ & $[3, -1]$ & $[2,3,20,-1,-6,-8,-10]$ \\
$16$ & $[2, 15, -3, -4, -5]$ & $[3,-1]$ & $[4, 5, 30, -6,-8,-10,-15]$ \\ 
$17$ & $[3,10,-2,-5,-9]$ & $[3,-1]$ & $[6, 9, 20, -3, -4, -10,-18]$\\ 
$18$ & $[2, 12, -1,-4,-6]$ & $[3,-1]$ & $[3, 4, 24, -2, -8, -9, -12]$ \\ 
$19$ & $[2,3, 12,-1,-4,-6,-9]$ & $[3,-1]$ & $[4,6,9,24,-2,-3,-8,-12,-18]$\\ 
$20$ & $[2,-3,-4]$ & $[1, 6, -2, -3]$ & $[4, 5, 30, -6, -8, -10, -15]$\\ 
$21$ & $[10,-5,-6]$ & $[1,6,-2,-3]$ & $[1,6,20,-2,-3,-10,-12]$ \\ 
$22$ & $[6,-3,-4]$ & $[1,6,-2,-3]$ & $[1,12, -2,-3,-8]$ \\ 
$23$ & $[10,-2,-5]$ & $[1,6,-2,-3]$ & $[6,9,20, -3,-4,-10,-18]$ \\ 
$24$ & $[10,-4,-5]$ & $[1,6,-2,-3]$ & $[2, 3, 20, -1, -6,-8,-10]$ \\ 
$25$ & $[6,-1,-4]$ & $[1,6, -2,-3]$ & $[3,12,-1,-6,-8]$ \\ 
$26$ & $[6,-2,-3]$ & $[1,10,-2, -5]$ & $[2, 5, 24,-1,-8,-10,-12]$ \\
$27$ &$[3, 10, -1, -5,-6]$ & $[1,6,-2,-3]$ & $[3, 20, -1, -10, -12]$ \\ 
$28$ & $[2, 15, -1, -5, -6]$ & $[1,6, -2,-3]$ & $[4, 15, -2,-5,-12]$ \\ 
$29$ & $[2,9,-1,-3,-4]$ & $[1,6,-2,-3$ & $[4,9,-2,-3,-8]$ \\ 
$30$ & $[1,6,-2,-3,-3]$ & $[1,6,-2,-3]$ & $[1,12,-3,-4,-6]$ \\ 
$31$ & $[1,10,-3,-4,-5]$ & $[1,6,-2,-3]$ & $[1,20,-3,-8,-10]$ \\ 
$32$ & $[2,15,-3,-4,-5]$ & $[1,6,-2,-3]$ & $[4,15,-5,-6,-8]$ \\ 
$33$ & $[3,10,-2,-5,-9]$ & $[1,6,-2,-3]$ & $[3,20,-4,-9,-10]$ \\ 
$34$ & $[2,12,-1,-4,-6]$ & $[1,6,-2,-3]$ & $[4,6,9,24,-2,-3,-8,-12,-18]$\\  
$35$ & $[2,3,12,-1,-4,-6,-9]$ & $[1,6,-2,-3]$ & $[3,4,14,-2,-8,-9,-12]$\\
$36$ & $[4,-3]$ & $[1,4,-2]$ & $[2,12,-1,-4,-9]$ \\
$37$ & $[3,-2]$ & $[1,4,-2]$ & $[2,9,-1,-4,-6]$ \\
$38$ & $[1,4,-2,-2]$ & $[1,4,-2]$ & $[2,3,12,-1,-4,-6,-6]$ \\ 
$39$ & $[1,8,-3,-4]$ & $[1,4,-2]$ & $[3,4,24,-2,-8,-9,-12]$ \\
$40$ & $[2,3,8,-1,-4,-6]$ & $[1,4,-2]$ & $[4,6,9,24,-2,-3,-8,-12,-18]$ \\
$41$ & $[3,-2]$ & $[2,3,-1]$ & $[1,12,-2,-3,-8]$ \\
$42$ & $[3,-2]$ & $[1,6,-2]$ & $[2,15,-1,-6,-10]$ \\ 
$43$ & $[3,-2]$ & $[3,4,-2]$ & $[2,15,-3,-4,-10]$ \\ 
 \hline
\end{tabular} 
\end{center}

\section{The structure of reducible lists with $D=2$} 

Suppose $\frak a$ is a primitive list corresponding to a factorial ratio with $D=2$.   We wish to develop criteria to check whether the list 
$\frak a$ is irreducible.  



\begin{lemma} \label{lemr.1}  Suppose that $p \ge 11$ is a prime which divides some, but not all, elements of $\frak a$, and suppose that 
the multiples of $p$ in $\frak a$ do not sum to zero.    Then $\frak a$ cannot be decomposed as $\frak b + \frak c$ where both $\frak b$ and 
$\frak c$ are dilates of   sporadic integral factorial ratios of height $1$.   
\end{lemma}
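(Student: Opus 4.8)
The plan is to argue by contradiction. Suppose that $\frak a = \frak b + \frak c$ with $\frak b = \beta \frak b_0$ and $\frak c = \gamma \frak c_0$, where $\beta$ and $\gamma$ are non-zero integers and $\frak b_0$, $\frak c_0$ are among the fifty-two sporadic integral factorial ratios of height $1$. Two facts drive the argument. First, one reads off from Bober's classification \cite{Bober} that every entry of a sporadic height $1$ factorial ratio has all of its prime factors at most $7$; since $p \ge 11$, it follows that $p$ divides no entry of $\frak b_0$ and no entry of $\frak c_0$ (this is precisely what forces the hypothesis $p \ge 11$). Second, a height $1$ factorial ratio has entry-sum $0$, so $s(\frak b_0) = s(\frak c_0) = 0$, whence $s(\beta\frak b_0) = s(\gamma \frak c_0) = 0$. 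The first fact gives a divisibility dictionary: an entry $\beta d$ of $\beta \frak b_0$ is divisible by $p$ if and only if $p \mid \beta$, and similarly an entry of $\gamma \frak c_0$ is divisible by $p$ if and only if $p \mid \gamma$.

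Next I would split into cases according to whether $p \mid \beta$ and whether $p \mid \gamma$. If $p$ divides both $\beta$ and $\gamma$, then every entry of $\beta\frak b_0$ and of $\gamma\frak c_0$, and hence every entry of $\frak a$ (removing cancelling pairs $\{x, -x\}$ from the concatenation preserves this), is a multiple of $p$; this contradicts the primitivity of $\frak a$, and also the assumption that $p$ does not divide all entries of $\frak a$. If $p$ divides neither, then by the dictionary no entry of $\beta\frak b_0$ or $\gamma\frak c_0$ is a multiple of $p$, so no entry of $\frak a$ is, contradicting the assumption that $p$ divides some entry of $\frak a$. Finally, suppose $p \mid \beta$ and $p \nmid \gamma$, the reverse case being symmetric. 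Then every entry of $\beta\frak b_0$ is a multiple of $p$ and no entry of $\gamma\frak c_0$ is, so no entry of $\beta\frak b_0$ can cancel against an entry of $\gamma\frak c_0$ when $\frak b + \frak c$ is formed; hence $\frak a$ contains the entire list $\beta\frak b_0$, and the multiples of $p$ occurring in $\frak a$ are exactly the entries of $\beta\frak b_0$. Their sum is $\beta\, s(\frak b_0) = 0$, contradicting the hypothesis that the multiples of $p$ in $\frak a$ do not sum to zero. Since all cases lead to a contradiction, $\frak a$ admits no such decomposition.

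The only genuine ingredient here is the classification fact that the sporadic height $1$ examples involve no prime larger than $7$; everything else is bookkeeping about how a prime can enter a concatenation of dilated lists, so I do not expect a serious obstacle. The one point deserving care is the handling of degeneracies when forming $\frak b + \frak c$: the observation used in the decisive case --- that a multiple of $p$ coming from $\beta\frak b_0$ cannot be annihilated by anything in $\gamma\frak c_0$ --- is what makes the sum of the $p$-multiples of $\frak a$ equal to $\beta\, s(\frak b_0)$ exactly, rather than merely a partial subsum of it.
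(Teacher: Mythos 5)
Your argument is correct and is essentially the paper's own proof: both rest on the single classification fact that sporadic height $1$ examples have all entries built from the primes $2,3,5,7$, so a dilate containing one multiple of $p\ge 11$ consists entirely of multiples of $p$, and then primitivity plus $s(\frak b_0)=0$ forces the $p$-multiples of $\frak a$ to sum to zero. Your case analysis is just a slightly more explicit bookkeeping of the same reasoning, including the correct observation that no cancelation can occur between multiples and non-multiples of $p$.
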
 
\begin{proof}  Suppose $\frak a$ can be decomposed as $\frak b +\frak c$.   The primitive sporadic factorial  ratios with $D=1$ have all elements divisible only by the primes $2$, $3$, $5$, $7$.  Therefore if either $\frak b$ or $\frak c$ contains a multiple of $p$ then all elements of that list must be multiples of $p$.  Since $\frak a$ is primitive, the elements of the other list must be coprime to $p$.    Therefore the multiples of 
$p$ in $\frak a$ must sum to zero, which we assumed not to be the case.   
\end{proof}

\begin{lemma} \label{lemr.2}   Suppose that $p \ge 11$ is a prime which divides some, but not all, elements of $\frak a$, and suppose that 
the multiples of $p$ in $\frak a$ do not sum to zero.   Suppose $\frak a$ decomposes as $\frak b + \frak c$ where $\frak c$ is a dilate of a sporadic 
 factorial ratio with $D=1$, while $\frak b$ lies in one of the infinite families with $D=1$ (so either 
$\frak b$ is of the form $[a+b,-a,-b]$ or of the form $[2a,-a,2b,-b, -(a+b)]$).    Then one of the following three cases holds: 

(i).  The number of multiples of $p$ in $\frak a$ is either exactly $1$, or is even and at least $4$. 

(ii).  There are exactly two multiples of $p$ in $\frak a$ and these are of the form $-ap$, $2ap$.  

(iii).  There are exactly three elements of $\frak a$ that are not multiples of $p$, and these include a pair $-b$, $2b$ with the third non-multiple being $\equiv -b \pmod p$.   
\end{lemma}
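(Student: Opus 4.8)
The plan is to trace how the prime $p$ sits inside a hypothetical decomposition $\frak a=\frak b+\frak c$, using two elementary observations. First, exactly as in the proof of Lemma~\ref{lemr.1}: a dilate $\frak c=k\frak c_0$ of a sporadic height~$1$ ratio has every element built only from the primes $2,3,5,7$, so since $p\ge 11$ either $p\mid k$, in which case \emph{every} element of $\frak c$ is a multiple of $p$, or $p\nmid k$, in which case \emph{no} element of $\frak c$ is. In both cases $s(\frak c)=0$, so the multiples of $p$ occurring in $\frak c$ sum to $0$. Second, forming $\frak b+\frak c$ only deletes pairs $\{x,-x\}$ with $x$ in one of the two lists and $-x$ in the other; such a pair contributes $0$ to the sum of the multiples of $p$, and, when $x$ is a multiple of $p$, lowers the count of multiples of $p$ by $2$. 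Hence the multiples of $p$ in $\frak a$ have the same sum as the multiples of $p$ in $\frak b$, which by hypothesis is non-zero; in particular $\frak b$ contains a multiple of $p$.

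Next I would pin down the multiples of $p$ inside $\frak b$. If $\frak b=[a+b,-a,-b]$, then $s(\frak b)=0$ forbids all three elements from being multiples of $p$, and also forbids exactly two from being multiples of $p$ (the sum of those two would be minus the third, which is not divisible by $p$, whereas a sum of multiples of $p$ is); so exactly one element of $\frak b$ is a multiple of $p$. If $\frak b=[2a,-a,2b,-b,-(a+b)]$, then checking the four cases according to whether $p\mid a$ and whether $p\mid b$ (using $p\ne 2$, so $p\mid 2a\iff p\mid a$) shows that $p\mid a$ and $p\mid b$ together is impossible, while in the remaining cases either exactly one element, $-(a+b)$, is a multiple of $p$, or exactly two are and they form a pair $\{-c,2c\}$ with $p\mid c$.

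Finally I would feed this back into $\frak a$, splitting on whether $p\mid k$. If $p\nmid k$, no deleted pair involves a multiple of $p$, so the multiples of $p$ in $\frak a$ are precisely those of $\frak b$: either a single one, which is case~(i); or the pair $\{-c,2c\}$, which on writing $c=ap$ has the form $\{-ap,2ap\}$, namely case~(ii). If $p\mid k$, every deleted pair consists of multiples of $p$, so the deleted elements of $\frak b$ lie among its one or two multiples of $p$, and the non-multiples of $p$ in $\frak a$ are exactly those of $\frak b$. Using that a sporadic list has odd length $\ge 5$ (length-$3$ height-$1$ lists are binomial coefficients), if $\frak b$ has a single multiple of $p$ then the number of multiples of $p$ in $\frak a$ equals $1+\ell(\frak c)$ or $\ell(\frak c)-1$, both even and $\ge 4$, which is case~(i); while if $\frak b$ has the two multiples $\{-c,2c\}$, then the three non-multiples of $p$ in $\frak b$, which (after renaming so that $p\mid a$) are $2b,-b,-(a+b)$ with $-(a+b)\equiv -b\pmod p$, are exactly the non-multiples of $p$ in $\frak a$, which is case~(iii). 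Every possibility is thus accounted for.

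I expect the step needing the most care to be the bookkeeping of these deletions, in particular making sure that the only way $\frak a$ can end up with exactly two multiples of $p$ is the structured configuration of case~(ii). This is what the dichotomy above delivers: two surviving multiples of $p$ force $p\nmid k$, so no deletion touches them and they keep the shape $\{-ap,2ap\}$ inherited from $\frak b$, whereas the complementary possibilities ($\frak b$ with one multiple of $p$, or $p\mid k$) are forced into case~(i) or case~(iii) by the length bound $\ell(\frak c)\ge 5$. Beyond Lemma~\ref{lemr.1} and the known classification of height-$1$ ratios, that length bound is the only external input, and the remaining checks are short finite verifications.
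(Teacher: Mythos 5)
Your proof is correct and follows essentially the same route as the paper's: split on whether the dilate $\frak c$ consists entirely of multiples of $p$ or entirely of elements coprime to $p$, pin down the (one or two) multiples of $p$ in $\frak b$, and track which cancellations are possible, using $\ell(\frak c)\ge 5$ to land in case (i) when $\frak c$ is all multiples of $p$. Your explicit bookkeeping of the sum of the multiples of $p$ through the concatenation is a slightly more careful rendering of what the paper states in one line, but the argument is the same.
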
  
\begin{proof} Since $\frak c$ is a dilate of a sporadic factorial ratio with $D=1$, either $\frak c$ consists entirely of multiples of $p$, 
or entirely of elements coprime to $p$.  In either case, since the sum of multiples of $p$ in $\frak a$ is non-zero, the list $\frak b$ must 
contain some multiples of $p$ and some elements coprime to $p$.   

If $\frak b$ is a binomial coefficient, then from the above remark, $\frak b$ must contain exactly one multiple of $p$.   If $\frak c$ has 
no multiples of $p$, then $\frak a$ will be left with exactly $1$ multiple of $p$.   If $\frak c$ consists entirely of multiples of $p$, then 
$\frak a$ either has $\ell(\frak c)-1$ or $\ell(\frak c) +1$ multiples of $p$, and this is an even number at least $4$.  Thus we are in case (i).  

If $\frak b$ is of the form $[2a,-a,2b,-b, -(a+b)]$ then (again by our previous remark) either $\frak b$ contains exactly $1$ multiple of $p$, or 
has $2$ multiples of $p$.   If $\frak b$ contains exactly $1$ multiple of $p$, then the argument of the preceding paragraph shows that we are in 
case (i).   Suppose now that $\frak b$ contains two multiples of $p$, which must be a pair of the form $-ap$, $2ap$ for some integer $a$.  If 
$\frak c$ has no multiples of $p$, then these are the only multiples of $p$ in $\frak a$, and we are in case (ii) of the lemma.  Finally, if $\frak c$ consists 
entirely of multiples of $p$, then the three elements of $\frak b$ that are not multiples of $p$ must be left uncanceled in $\frak a$, and these include a 
pair of elements $-b$, $2b$.  Thus we must be in case (iii) here.  
\end{proof}

\begin{lemma} \label{lemr.3}  Suppose that $p\ge 11$ is a prime, and that the number of multiples of $p$ in $\frak a$ is odd and 
at least $3$.  Suppose that the sum of the multiples of $p$ in $\frak a$ is not zero.   Suppose $\frak a$ decomposes as $\frak b + \frak c$ 
where both $\frak b$ and $\frak c$ belong to one of the infinite families with height $1$.    Then one of the following cases occurs: 



(i).  There are exactly three non-multiples of $p$ in $\frak a$, and when reduced $\pmod p$ these three 
elements are congruent to $x$, $x$, $-2x \pmod p$ for some non-zero $x\pmod p$.  

(ii).  There are exactly five non-multiples of $p$ in $\frak a$, and these element are of the form $4x$, $-x$, $2y$, $-y$, $-z$ for integers $x$, $y$, $z$.  
There are three multiples of $p$ in $\frak a$, and these are either $2z-4x$, $-(z-2x)$, $-(x+y)$, or $2(z-x)$, $-(z-x)$, $-(2x+y)$.    

(iii).   There are exactly five non-multiples of $p$ in $\frak a$, and these are elements of the form $x$, $2y$, $-y$, $2z$, $-z$.  There are three 
multiples of $p$ in $\frak a$, and these are either $-(x/2+y)$, $-(x+2z)$, $(x/2+z)$ (and this only occurs for $x$ even), or $(x-y)$, $-2(2x+z)$, $(2x+z)$.  

(iv). There are no degeneracies in concatenating $\frak b$ and $\frak c$, and 
either $\frak a = [2a, -a, 2b, -b, 2c, -c, 2d, -d, -(a+b), -(c+d)]$, or $\frak a = [2a, -a, 2b, -b, -(a+b), (c+d), -c, -d]$.  
\end{lemma}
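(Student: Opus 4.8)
The plan is to split the analysis of $\frak a = \frak b + \frak c$ according to which elements of each summand are divisible by $p$, and then to follow what remains once the degeneracies of the concatenation are removed. Recall from Lemma \ref{lemr.2} that each of $\frak b$ and $\frak c$ is, as a list, either of the shape $[a+b,-a,-b]$ or of the shape $[2a,-a,2b,-b,-(a+b)]$ (in the second shape the parameters may have opposite signs, which is what allows all the infinite families to be captured), and that every such list has sum $0$. Since $p\ge 11$, a quick inspection of the two shapes shows that the number of $p$-divisible elements is $0$, $1$ or $3$ for the first shape, and $0$, $1$, $2$ or $5$ for the second; and if \emph{every} element of such a list is divisible by $p$, then the list is $p$ times a list of the same shape, so that in particular its $p$-divisible elements sum to $0$.

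The bookkeeping runs as follows. Each cancelled pair $\{x,-x\}$ in forming $\frak a=\frak b+\frak c$ uses one element of $\frak b$ and one of $\frak c$ (each summand being non-degenerate), and a $p$-divisible element can cancel only against another $p$-divisible element; hence the number of $p$-divisible elements of $\frak a$ equals $m_{\frak b}+m_{\frak c}-2t$, where $m_{\frak b}$, $m_{\frak c}$ count the $p$-divisible elements of $\frak b$, $\frak c$ and $t\ge 0$ counts cancelled $p$-divisible pairs, while the sum of the $p$-divisible elements of $\frak a$ equals the sum of those of $\frak b$ together with those of $\frak c$. As the number of $p$-divisible elements of $\frak a$ is odd, exactly one of $m_{\frak b}$, $m_{\frak c}$ is odd, and after renaming I may assume $m_{\frak b}\in\{1,3,5\}$ and $m_{\frak c}\in\{0,2\}$. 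If $m_{\frak c}=0$ then there is no $p$-divisible cancellation, so $\frak a$ has $m_{\frak b}$ elements divisible by $p$; for this to be at least $3$ one would need $m_{\frak b}\in\{3,5\}$, but then $\frak b$ is $p$ times a shorter list, so the $p$-divisible elements of $\frak b$ sum to $0$ and those of $\frak c$ sum to $0$ as well, contradicting the hypothesis. Hence $m_{\frak c}=2$; since the first shape never has exactly two $p$-divisible elements, $\frak c$ has the second shape, its $p$-divisible elements form a pair $\{2U,-U\}$ with $p\mid U$, and its remaining three elements are $2V$, $-V$, $-(U+V)$ with $p\nmid V$ and $p\nmid U+V$.

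Next I would run the cases on $m_{\frak b}$. If $m_{\frak b}\in\{3,5\}$ then every element of $\frak b$ is divisible by $p$, so no non-$p$-divisible element of $\frak a$ comes from $\frak b$ and the non-$p$-divisible part of $\frak a$ is exactly $\{2V,-V,-(U+V)\}$, which reduces modulo $p$ (using $p\mid U$) to $2V,-V,-V$, i.e.\ to $x,x,-2x$ with $x\equiv-V\not\equiv 0$: this is case (i). So suppose $m_{\frak b}=1$. The unique $p$-divisible element $r$ of $\frak b$ cannot cancel $2U$ or $-U$ (that would leave only one $p$-divisible element in $\frak a$), so $\frak a$ has precisely the three $p$-divisible elements $r,2U,-U$, and the rest is a finite analysis of the cancellations among the non-$p$-divisible elements of $\frak b$ (two of these if $\frak b$ has the first shape, four if the second) and those of $\frak c$, namely $2V,-V,-(U+V)$. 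If there is no such cancellation, then $\frak a$ is the degeneracy-free concatenation of the second-shape list $\frak c$ with $\frak b$, which is exactly case (iv). If there is one cancellation and $\frak b$ has the first shape $[X,-Y,-Z]$, then $\frak a$ has three non-$p$-divisible elements; using $X=Y+Z$, $p\mid U$, and the cancellation relation these again reduce modulo $p$ to $2V,-V,-V$, so we are in case (i). If there is one cancellation and $\frak b$ has the second shape $[2S,-S,2T,-T,-(S+T)]$ (so $r=-(S+T)$, $p\mid S+T$, $p\nmid S,T$), then $\frak a$ has five non-$p$-divisible elements; up to the symmetry $S\leftrightarrow T$ the relation imposed by the cancellation is one of $V=2S$, $S=2V$, $U+V=2S$, $U+V=-S$, and substituting it into the five survivors and into $\{r,2U,-U\}$ yields, respectively, the two options of case (ii) (from the first two relations) and the two options of case (iii) (from the last two). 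Finally, two or more cancellations among the non-$p$-divisible elements either collapse $\frak a$ onto a three-element residue pattern $x,x,-2x$ (case (i)), as happens for the ``doubled'' relation $S=-V$, or are outright impossible.

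The main obstacle is precisely this last step. First, the matching itself: for each of the four one-cancellation relations one must check both that the five surviving non-$p$-divisible elements really take the form $4x,-x,2y,-y,-z$ or $x,2y,-y,2z,-z$ (with $x$ even in the relevant subcase of (iii)) and that $\{r,2U,-U\}$, after the substitution, equals exactly one of the two prescribed triples — for instance $2(z-x),-(z-x),-(2x+y)$ or $2z-4x,-(z-2x),-(x+y)$ in case (ii) — which is routine but must be done with care about signs and the defining relations. Second, one must show that more than one cancellation produces no new configuration: each extra cancellation forces a further linear relation among $S,T,U,V$, and, combined with $p\mid S+T$, $p\mid U$, $p\nmid S$, $p\nmid T$, $p\nmid V$ and $p\ge 11$, these relations force $p$ to divide one of $S$, $T$, $V$, a contradiction — the only exceptions being the ``doubled'' relations (such as $S=-V$), which one checks land in case (i). Once these two points are settled, every decomposition of $\frak a$ of the stated type falls into one of cases (i)--(iv).
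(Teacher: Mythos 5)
Your proposal is correct and follows essentially the same route as the paper: reduce to the two shapes $[a+b,-a,-b]$ and $[2a,-a,2b,-b,-(a+b)]$, count the multiples of $p$ in each summand (using parity and the nonvanishing of their sum to eliminate cases), and then enumerate the possible cancellations among the non-multiples. Your treatment is, if anything, slightly more systematic than the paper's in organizing the cases by $(m_{\frak b},m_{\frak c})$ and in explicitly arguing that two or more independent cancellations force $p\mid S$, $T$ or $V$.
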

\begin{proof}  If either $\frak b$ or $\frak c$ has no multiples of $p$ and the other list is entirely composed of multiples of $p$, then the 
sum of multiples of $p$ in $\frak a$ would be zero.  Thus, this case is forbidden.  Further, if $\frak b$ has $u$ multiples of $p$ and $\frak c$ has 
$v$ multiples of $p$, then the number of multiples of $p$ in $\frak a$ is at most $u+v$ and has the same parity as $u+v$.  Thus we may restrict attention 
to the cases when $u+v$ is odd and at least three.  We will make repeated use of these observations below.   Indeed, these observations immediately 
rule out the possibility that both $\frak b$ and $\frak c$ are binomial coefficients (since any binomial coefficient would have $0$, $1$ or $3$ multiples of $p$).   
We are left with two cases: by symmetry we assume that $\frak c$ is of the form $[2a,-a,2b,-b,-(a+b)]$, and $\frak b$ is either also of this form, or $\frak b$ is a 
binomial coefficient.

\smallskip

\noindent {\bf The case $\frak b$ is a binomial coefficient and $\frak c$ is of the form $[2a,-a, 2b, -b, -(a+b)]$.}  Then $\frak b$ has $0$, $1$ or $3$ multiples 
of $p$, and $\frak c$ has $0$, $1$, $2$, or $5$ multiples of $p$.   Using our observations above, we are reduced to two possibilities:  
  $\frak b$ has $1$ or $3$ multiples of $p$, and $\frak c$ has $2$ multiples of $p$.  


 Suppose $\frak b$ has $3$ multiples of $p$ and $\frak c$ has $2$ multiples of $p$.  
  Then there are three non-multiples of $p$ in $\frak c$, which are left uncanceled in $\frak a$.   These elements in $\frak c$ must sum to zero $\pmod p$, and 
include a pair $-a$, $2a$, so that we are in case (i). 

Now suppose $\frak b$ has exactly $1$ multiple of $p$ and $\frak c$ has $2$ multiples of $p$.   Since $\frak a$ has at least $3$ multiples of $p$, 
there is no cancelation among the multiples of $p$ in $\frak b$ and $\frak c$.     If there is no cancelation among the non-multiples of $p$ in $\frak b$ and $\frak c$ as 
well, then we must be in case (iv).    Suppose then that there is some cancelation among the non-multiples of $p$ in $\frak b$ and $\frak c$.      Now the non-multiples of $p$ in $\frak b$ look like $-y$, $y \pmod p$ for some 
$y\not\equiv 0\pmod p$, and the non-multiples of $p$ in $\frak c$ look like $2x$, $-x$, $-x \pmod p$ for some $x\neq 0 \pmod p$.  It follows that there must be exactly one non-multiple in $\frak b$ that cancels with a non-multiple in $\frak c$.  After canceling them, we must be left with three non-multiples that look like $2x$, $-x$, $-x \pmod p$.   That is, we are in case (i).

\smallskip 

\noindent{\bf Both $\frak b$ and $\frak c$ are of the form $[2a,-a,2b,-b,-(a+b)]$.}   Suppose, by symmetry, that $\frak c$ has at least as many multiplies 
of $p$ as $\frak b$.  Culling the possibilities for the number of multiples of $p$ using our earlier observations, we are left with two choices:  
$\frak b$ has $1$ multiple of $p$ and $\frak c$ has $2$ multiples of $p$, 
or $\frak b$ has $2$ multiples of $p$ and $\frak c$ has $5$ multiples of $p$.  In the second case, there are $3$ non-multiples of $p$ in $\frak a$, and we 
are in case (i).

We are left with the case that $\frak b$ has $1$ multiple of $p$ and $\frak c$ has $2$ multiples of $p$, and we may assume that there 
is no cancelation among these multiples of $p$.   If there is no cancelation among the non-multiples of $p$ in $\frak b$ and $\frak c$ then we are in case (iv).  
So there must be some cancelation among the non-multiples of $p$ in $\frak b$ (which we write as $2a$, $-a$, 
$2b$, $-b$ with $a+b\equiv 0 \pmod p$) and $\frak c$ (which we write as $2c$, $-c$, $-d$ with $c\equiv d \pmod p$).  If $c=-a$ or $-b$ then we are in case (i).  
If $2c=a$ or $b$, or $c=2a$ or $2b$ then a quick check shows that we are in case (ii). 
If $d=-a$ or $d=-b$, or $d=-2a$ or $d=-2b$ then we are in case (iii).

Having exhausted all possibilities, the proof of the lemma is complete.
\end{proof}

\begin{corollary}  \label{cor1}   The twelve families given in \eqref{3.17} to \eqref{3.28}, together with the $43$ families listed in the table in Section 5 
are all irreducible.   Askey's family, 
$$ 
[3(m+n), 3n, 2m, 2n, -(2m+3n), -(m+2n), -(m+n), -m, -n, -n], 
$$ 
is also irreducible.  
\end{corollary}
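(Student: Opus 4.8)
The plan is to verify irreducibility of each of these families by exhibiting, for a Zariski-dense (in fact, infinite and parametrically described) subcollection of lists in the family, a prime $p \ge 11$ that divides some but not all entries, whose multiples in $\frak a$ do not sum to zero, and such that the structural constraints of Lemmas~\ref{lemr.1}, \ref{lemr.2}, and \ref{lemr.3} are all violated. Since every reducible list of height $2$ decomposes as $\frak b + \frak c$ with $\frak b$, $\frak c$ of height $1$, and every height-$1$ factorial ratio is either a binomial coefficient, a member of the family $[2a,-a,2b,-b,-(a+b)]$, or a dilate of one of the $52$ sporadic examples, these three lemmas exhaust the possible shapes of a decomposition once such a $p$ is fixed. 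So the strategy is: for each of the $12 + 43 + 1 = 56$ families, pick a generic member, choose the prime to be (a prime factor of) one of the two free parameters, and check that the resulting pattern of $p$-multiples and non-$p$-multiples matches none of cases (i)--(iv) in Lemma~\ref{lemr.3}, none of (i)--(iii) in Lemma~\ref{lemr.2}, and is excluded by Lemma~\ref{lemr.1}.

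First I would set up the common mechanism. For a family written in parameters $a$ and $b$ (say), choosing $b$ to be a large prime $p \ge 11$ coprime to the fixed coefficients makes the ``multiples of $p$'' in $\frak a$ precisely the entries whose coefficient expression is a fixed integer multiple of $b$ (e.g.\ $2b$, $-3b$, $6b$, etc.), while the remaining entries, being fixed multiples of $a$ or linear forms $\alpha a + \beta b$ with $\alpha \ne 0$, are non-multiples of $p$ and are already determined $\pmod p$ by their $a$-part. One then reads off three data: the count of $p$-multiples, the count of non-$p$-multiples, and the residues $\pmod p$ of the non-$p$-multiples (equivalently, of the $p$-multiples, since the two sums are negatives of each other). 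Lemma~\ref{lemr.1} kills the ``both sporadic'' decomposition outright (as long as the $p$-multiples don't sum to zero, which holds generically). Lemma~\ref{lemr.2} is defeated by arranging, e.g., that there are exactly $3$ $p$-multiples (ruling out case (i) of that lemma, which needs $1$ or an even number $\ge 4$), that these three are not of the form $-ap, 2ap$ plus nothing (case (ii) needs exactly two), and that the non-$p$-multiples do not contain a pair $-b, 2b$ with the third $\equiv -b$ (case (iii)); one also swaps the roles of $a$ and $b$ if needed to handle both assignments of which sublist is sporadic. Lemma~\ref{lemr.3} is the most delicate: with $3$ $p$-multiples one is forced into case (i) unless either the three non-$p$-multiples fail to be congruent to $x, x, -2x \pmod p$, or there are $5$ non-$p$-multiples with the very rigid shapes in (ii)/(iii), or one hits case (iv); checking that none of these rigid shapes can be met amounts to a short linear-algebra/parity argument in each family.

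The key steps, then, in order: (1) State the reduction — a reducible height-$2$ list is $\frak b + \frak c$ with both of height $1$, hence either both sporadic, one sporadic and one infinite-family, or both infinite-family, covered respectively by Lemmas~\ref{lemr.1}, \ref{lemr.2}, \ref{lemr.3}. (2) For each family, name a specific parametrization of a cofinite subfamily and a choice of prime $p$ (a prime factor of one parameter, taken $\ge 11$ and coprime to all fixed coefficients and to the other parameter) so that the $p$-multiples are exactly the ``$b$-scaled'' entries; verify the $p$-multiples sum to a nonzero value (a nonvanishing linear form in $b$). (3) Tabulate $(\#p\text{-multiples}, \#\text{non-}p\text{-multiples}, \text{residues mod }p)$ and check this configuration is excluded by Lemmas~\ref{lemr.1}--\ref{lemr.3}; where a family admits the prime on either parameter, do this for both choices, or argue that one choice suffices because the $p$-multiple count already rules out every surviving case. (4) Handle Askey's family separately by the same method — its entries $3(m+n), 3n, 2m, 2n$ on the positive side and $-(2m+3n), -(m+2n), -(m+n), -m, -n, -n$ on the negative side; take $p \mid m$ with $p \nmid n$, so the $p$-multiples are $2m, -(m+2n)$ reduced, ... — actually one takes $p$ dividing a suitable linear combination, or observes that taking $m \equiv 0$, the $p$-multiples are exactly $\{2m, -2m\}$-type is too few, so instead take $p \mid (m+n)$: then $3(m+n), -(m+n)$ are $p$-multiples, two of them, of the form $3c, -c$ — and continue the case analysis to land outside (i)--(iv).

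The main obstacle I expect is case (iv) of Lemma~\ref{lemr.3}, and to a lesser extent cases (ii) and (iii): these describe honest height-$2$ lists (concatenations with no cancellation, or with very specific cancellation), so ruling them out is not automatic from the $p$-divisibility count alone — one must genuinely check that no member of the given family has the shape $[2a,-a,2b,-b,2c,-c,2d,-d,-(a+b),-(c+d)]$ or $[2a,-a,2b,-b,-(a+b),(c+d),-c,-d]$ (nor the (ii)/(iii) shapes) for \emph{any} values of the auxiliary integers. For most of the $43$ table families and the $12$ families of \eqref{3.17}--\eqref{3.28} this is easy because they contain an entry like $6b$ or $12a$ or a coefficient $\equiv 0 \pmod 5$ that cannot appear as $\pm c$ or $\pm 2c$ in the rigid patterns while respecting the other constraints; but a handful of families (those built from $[1,6,-2,-3]$ or $[2,3,12,-1,-4,-6,-6]$, which themselves superficially resemble pieces of case (iv)) will require a more careful bookkeeping argument, essentially showing that the pattern of GCDs or the number of distinct entries forces an inconsistency. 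I would organize this by grouping the $43+12$ families according to which ``obstruction entry'' they contain, so that one uniform argument dispatches each group, leaving only two or three families to treat by hand.
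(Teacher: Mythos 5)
Your proposal is correct and follows essentially the same route as the paper: reduce to the three decomposition types handled by Lemmas~\ref{lemr.1}--\ref{lemr.3}, choose the parameter scaling an odd-length sublist to be a large multiple of a prime $p\ge 11$ (and $p\mid(m+n)$ for Askey's family), and then eliminate the residual cases of Lemmas~\ref{lemr.2} and \ref{lemr.3} by inspection, with case (iv) for Askey's family disposed of by an ad hoc observation (the paper uses the parity of the largest element).
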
  
\begin{proof}   Apart from Askey's family and the example \eqref{3.28}, the remaining $54$ families look like $a \frak a + b\frak b + (-a s(\frak a) - bs(\frak b)) [1]$ 
for suitable primitive lists $\frak a$, $\frak b$, where $a$ and $b$ are coprime, and chosen so that the resulting list has height $2$.  In all these examples, $s(\frak a)$ and $s(\frak b)$ are both non-zero, and either $\ell(\frak a)$ or $\ell(\frak b)$ is an 
odd number at least $3$.  If $\frak a$ has an odd number of elements, then choose $a$ with large size and divisible by $p$ for some prime $p\ge 11$, and similarly if $\frak b$ has 
an odd number of elements, choose $b$ with large size and divisible by $p$.   Lemma \ref{lemr.1} now guarantees that such a list cannot be decomposed into two dilates 
of sporadic factorial ratios of height $1$.   By straightforward (if lengthy) inspection, we can eliminate the various cases that reducible lists must belong 
to (given in Lemmas \ref{lemr.2} and \ref{lemr.3}), and conclude that all these lists are irreducible.  

The argument for list \eqref{3.28} is similar, choosing $a$ to be  a large multiple of a prime $p\ge 11$, and checking the conclusions of Lemmas \ref{lemr.1}, \ref{lemr.2}, and \ref{lemr.3}.   

In Askey's family, we choose $m$ and $n$ to be large coprime positive numbers with $m+n$ being an odd multiple of a prime $p\ge 11$.   Lemmas \ref{lemr.1} and \ref{lemr.2} still apply and shows that such a list is not reducible into two sporadic factorial ratios, or into a sporadic factorial ratio and one from an infinite family.   Since the lists in Askey's family have length $10$, the only remaining possibility is that the list looks like $[2a,-a, 2b, -b, 2c, -c, 2d, -d, -(a+b), -(c+d)]$.   But such lists (with height $2$) have 
the property that the largest element in them is even, whereas our example from Askey's family has largest element $3(m+n)$ which is odd.
\end{proof}

 \section{Completing the proof of Theorem \ref{thm1}} 
 
 \noindent We have already shown that, apart from finitely many exceptions, all primitive lists with height $2$ and norm at most $1/3+ \delta$ 
 must lie in one of the $28$ families catalogued in Section 3.  The $16$ families given in Section 3.1 are reducible, and thus the lists of height $2$ in 
 these families correspond automatically to integral factorial ratios.   The $12$ families given in Sections 3.2 and 3.3 are all known by Corollary \ref{cor1} to 
 be irreducible.    In Section 5 we noted that the height $2$ lists from the 
 families \eqref{3.17}, \eqref{3.18}, \eqref{3.21}, \eqref{3.22}, \eqref{3.25}, and \eqref{3.26} are all integral factorial ratios 
 thanks to Theorem \ref{thm2}.   Thus all that remains is to show that height $2$ lists in the six families \eqref{3.19}, \eqref{3.20}, \eqref{3.23}, \eqref{3.24}, \eqref{3.27}, and
 \eqref{3.28} are also integral factorial ratios.

 Given a particular family it is straightforward to check whether the elements in it correspond to integral factorial ratios.  We illustrate with one of
 these six remaining families, the others following similarly.   To show that lists of height $2$ from \eqref{3.28} give rise to integral factorial ratios, it is 
 enough to show that 
 $$ 
 \lfloor 2x \rfloor - \lfloor x \rfloor -\lfloor 4x \rfloor + \lfloor 2y \rfloor - \lfloor y \rfloor -\lfloor 4y \rfloor + \lfloor 6(x+y) \rfloor - \lfloor 3(x+y) \rfloor \ge 0 
 $$ 
 for all $x$ and $y$.  Since the left side is periodic in $x$ and $y$ with period $1$, it is enough to verify the inequality for $x$ and $y$ in $[0,1)$.  
 For fixed $x$, the quantity $\lfloor 6(x+y)\rfloor -\lfloor 3(x+y)\rfloor$ is increasing in $y$, while the quantity $\lfloor 2y \rfloor -\lfloor y\rfloor -\lfloor 4y\rfloor$ is 
 constant on the intervals $[0,1/4)$, $[1/4,3/4)$, and $[3/4,1)$.  So it is enough to verify the inequality at $y=0$, $1/4$ and $3/4$.  Arguing similarly, it is enough to check 
 the inequality when $x=0$, $1/4$, $3/4$.  After a small calculation to check these nine cases, the inequality follows.


\begin{thebibliography}{20} 

\bibitem{A} Askey, R.  {\sl Advanced Problem 6514}, Amer. Math. Monthly, 93 (4), (1986), 304--305; Solution by Askey and Patruno in Amer. Math. Monthly 94 (10), (1987) 1012--1014.


\bibitem{BH}  Beukers, F., Heckman, G. {\sl Monodromy for the hypergeometric function ${}_nF_{n-1}$}.  Invent. Math. 95, 325--354 (1989).

\bibitem{Bober}  Bober, J. {\sl Factorial ratios, hypergeometric functions, and a family of step functions}, J. London Math. Soc. (2009), 422--444.

\bibitem{Bober2} Bober, J. {\sl Integer ratios of factorials, hypergeometric functions, and related step functions}.  Ph. D. Thesis, University of Michigan, (2009). 





\bibitem{FW}  Forrester, P.J., Warnaar, S.O.   {\sl The importance of the Selberg integral.}  Bulletin Amer. Math. Soc. 45 (4) (2008), 489--534. 

\bibitem{G}   Gessel, I. {\sl Integer quotients of factorials and algebraic generating functions.}  MIT Combinatorics Seminar, September 30, 2011.   
 

\bibitem{RV} Rodriguez--Villegas, F. {\sl Integral ratios of factorials and algebraic hypergeometric functions}, arXiv.math/0701362v1, (2007).


\bibitem{S}   Soundararajan, K. {\sl Integral factorial ratios}, preprint, arXiv:1901.05133. 


\bibitem{V}  Vasyunin, V. I.  {\sl On a system of step functions}.  Zap. Nauchn. Sem. S.-Peterburg. Otdel. Math. Inst. Steklov. 
(POMI) 262: 49--70, 231-232, 1999.   Translation in J. Math. Sci.  (New York) (2002), 2930--2943. 



\bibitem{W}  Wider, N.  {\sl Integrality of factorial ratios}.  Masters thesis, ETH Z{\" u}rich, (2012). 


\bibitem{Z} Zeilberger, D. {\sl A unified approach to Macdonald's root system-Dyson conjecture}.   SIAM J. Math. Anal. 19 (1988), 987--1013. 

\end{thebibliography}
 \end{document}